\newenvironment{proof}{{\bf Proof}:\ }%
{~\ \hfill $\Box$\vspace{0,5cm}}
\newtheorem{prop}{Property}[section]
\newtheorem{theorem}{Theorem}[section]
\newtheorem{lemma}[theorem]{Lemma}
\newtheorem{coro}[theorem]{Corollary}
\numberwithin{equation}{section}
\begin{document}

\title{On the complexity of Dominating Set for graphs with fixed diameter}
\author{
    Valentin Bouquet\footnotemark[2]
    \and
    Fran\c cois Delbot\footnotemark[3]
    \and
    Christophe\ Picouleau\footnotemark[1] \footnotemark[2]
    \and
    St\'ephane Rovedakis\footnotemark[2]
}

\date{\today}

\def\thefootnote{\fnsymbol{footnote}}

\footnotetext[1]{ \noindent
Corresponding author: {\tt chp@cnam.fr}}

\footnotetext[2]{ \noindent
Conservatoire National des Arts et M\'etiers, CEDRIC laboratory, Paris (France). Email: {\tt
valentin.bouquet@lecnam.net,chp@cnam.fr,stephane.rovedakis@cnam.fr}
}

\footnotetext[3]{ \noindent
Sorbonne Universit\'e, Laboratoire d'Informatique de Paris 6 (LIP6), Paris (France). Email: {\tt
francois.delbot@lip6.fr
}}

\graphicspath{{.}{graphics/}}

\maketitle
\begin{abstract}
    A set $S\subseteq V$ of a graph $G=(V,E)$ is a dominating set if each vertex has a neighbor in $S$ or belongs to $S$. \textsc{Dominating Set} is the problem of deciding, given a graph $G$ and an integer $k\geq 1$, if $G$ has a dominating set of size at most $k$. It is well known that this problem is $\mathsf{NP}$-complete even for claw-free graphs. We give a complexity dichotomy for \textsc{Dominating Set} for the class of claw-free graphs with diameter $d$. We show that the problem is $\mathsf{NP}$-complete for every fixed $d\ge 3$ and polynomial time solvable for $d\le 2$. To prove the case $d=2$, we show that \textsc{Minimum Maximal Matching} can be solved in polynomial time for $2K_2$-free graphs.

    \vspace{0.2cm}
    \noindent{\textbf{Keywords}\/}: Minimum Dominating Set, Minimum Maximal Matching, diameter, claw-free graphs, line graphs, $2K_2$-free, complexity.
\end{abstract}

\section{Introduction}
The graphs we are concerned are finite, undirected, simple and loopless.  The reader is referred to \cite{Bondy} and  \cite{GJ} for, respectively,  the definitions and notations on graph theory  and on computational complexity.

Given a graph $G=(V,E)$, a set $S\subseteq V$ is called a {\it dominating set} if for every $v\in V\setminus S$, $N(v)\cap S\neq \emptyset$. For a dominating set $S$ of $G$, we say that $S$ {\it dominates} $G$. The minimum cardinality of a dominating set in $G$ is denoted by $\gamma(G)$ and called the \textit{domination number}. A dominating set $S$ with $\vert S\vert=\gamma(G)$ is called a {\it minimum dominating set}. Following \cite{DomBook}, such a set is called a $\gamma$-set of $G$. \medskip

The decision problem associated with the minimum dominating set is defined as:

\begin{center}
    \begin{boxedminipage}{.99\textwidth}
        \textsc{\sc Dominating Set} \\[2pt]
        \begin{tabular}{ r p{0.8\textwidth}}
            \textit{~~~~Instance:} & a graph $G=(V,E)$ and an integer  $k\geq 1$. \\
            \textit{Question:}     & is $\gamma(G)\leq k$ ?
        \end{tabular}
    \end{boxedminipage}
\end{center}

Our aim is to determine the computational complexity of computing a $\gamma$-set or the domination number for graphs of fixed diameter. The paper is organized as follows. In the following section we give some definitions and notations. Section \ref{starfree} is devoted to star-free graphs; this section contains our results for line graphs and for the minimum maximum matching in $2K_2$-free graphs. Section \ref{Cfree} deals with graphs excluding cycles of some fixed size. We summarize our results and give complexity dichotomies in Section \ref{concl}.

\section{Definitions and notations}
Given a graph $G=(V,E)$, its diameter $diam(G)$ is the maximum length of a shortest path over every pair of vertices. For a vertex $v\in V$, $N(v)$ denotes its open neighborhood, i.e. the set of the neighbors of $v$, $N[v]=N(v)\cup\{v\}$ it closed neighborhood.
For a subset $S\subseteq V$, we let $G[S]$ denote the subgraph of $G$ {\it induced} by $S$, which has vertex set~$S$ and edge set $\{uv\in E\; |\; u,v\in S\}$. For a vertex $v\in V$, we write $G-v=G[V\setminus \{v\}]$ and for a subset $V'\subseteq V$ we write $G-V'=G[V\setminus V']$. A set $M\subseteq E$ is a {\it matching} when no two edges share an endpoint. A matching $M$ is {\it maximal} when there is no edge $e\in E\setminus M$ such that $M\cup\{e\}$ is a matching. A \textit{minimum maximal matching} is a maximal matching such that $\vert M\vert$ is minimum.
A set $S\subseteq V$ is called a {\it stable set} or an {\it independent set} if any pairwise distinct vertices $u,v\in S$ are non adjacent. The maximum cardinality of an independent set in $G$ is denoted by $\alpha(G)$. A stable set $S$ is {\it maximal} if  every vertex outside $S$ has a neighbor in $S$. A set $S\subseteq V$ is called a {\it clique} if any pairwise distinct vertices $u,v\in S$ are adjacent. A vertex $v\in V$ is {\it simplicial} when $N(v)$ is a clique.  If $V$ is a clique, then $G$ is a {\it complete graph}. We denote by $K_p,p\ge 1,$ the clique or the complete graph on $p$ vertices; $k.K_p$ is the disjoint union of $k$ cliques. The bipartite clique $K_{p,q}$ is the graph where the vertex set is partioned into two stables sets $A$ and $B$ with $\vert A\vert=p,\vert B\vert=q$ such that there is an edge $ab$ for every pair $a\in A,b\in B$. The \textit{star} is $K_{1,p}$ and the {\it claw} is $K_{1,3}$. The induced cycle on $p$ vertices is denoted by $C_p$. When $p\ge 4$ then $C_p$ is a {\it hole}. The {\it girth} of $G$ denoted by $g(G)$ is the minimum length of a (induced) cycle.  For a fixed graph $H$, we write $H\subseteq_i G$ whenever  $G$ contains $H$ as an induced subgraph, and we say that $G$ is {\it $H$-free} if $G$ has no induced subgraph isomorphic to $H$. For a set of graphs ${\cal H}=\{H_1,\ldots,H_p\}$, $G$ is $(H_1,\ldots,H_p)$-free or $\cal H$-free if it has no induced subgraph isomorphic to $H_i,1\le i\le p$.

The problems we study throughout the paper concern graphs $G=(V,E)$ with fixed diameter $diam(G)=k$. Note that, a graphs with $diam(G)=1$ is a clique,  a graphs with $diam(G)=2$ is such that for any pair of vertices $u,v$ such that $uv\not\in E$ then there exists a vertex $w\ne u,v$ with  $w\in N(u)\cap N(v)$.

\section{$\boldmath{K_{\boldmath{1,l}}}$-free graphs}\label{starfree}

In this section we are interested in how the diameter and the size of a fixed forbidden star impact the computational complexity of \textsc{Dominating Set}. It is known from Yannakakis and Gavril \cite{Yannakakis} that \textsc{Dominating Set} is $\mathsf{NP}$-complete for claw-free graphs. Therefore we focus first on the following question: for which value of $d\geq 2$ does \textsc{Dominating Set} remains $\mathsf{NP}$-complete for claw-free graphs with fixed diameter $d$ ? For an initial response, we show that \textsc{Dominating Set} is $\mathsf{NP}$-complete for claw-free graphs with diameter $d\geq 3$. Then, it remains the case of claw-free graphs with diameter $2$. For that, we focus first on line graphs which is a subclass of claw-free graphs. For line graphs $L(G)$, we know that the complexity of \textsc{Dominating Set} is tightly related to the complexity of computing a minimum maximal matching in $G$. Moreover, Martin et al. \cite{Martin} have shown that if $L(G)$ is a line graph with diameter $2$, then $G$ is $2K_2$-free. Therefore we show that we can compute a minimum maximal matching in polynomial-time in $2K_2$-free graphs. Then we use known results of the literature to show that \textsc{Dominating Set} is polynomial-time solvable for the other claw-free graphs with diameter $2$. After delimiting all possible cases for the diameter and the claw, we answer the following question: for which stars does \textsc{Dominating Set} remains polynomial-time solvable for star-free graphs with diameter $2$ ? As a response, we show that \textsc{Dominating Set} is $\mathsf{NP}$-complete for $K_{1,4}$-free graphs with diameter $2$.

\subsection{Claw-free graphs with diameter $\boldsymbol{d\ge 3}$}
The main part of the proof of the following result deals with the claw-free graphs with diameter 3. We show how to adapt our construction for claw-free graphs with greater diameter at the end of the proof.
\begin{lemma}\label{kge3}
    For any integer $d\ge 3$, \textsc{Dominating Set} is $\mathsf{NP}$-complete for claw-free graphs with diameter $d$.
\end{lemma}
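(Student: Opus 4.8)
The plan is to prove $\mathsf{NP}$-completeness by a polynomial-time reduction from a known $\mathsf{NP}$-complete problem, while taking care that the constructed graph is both claw-free and has diameter exactly $d$. Membership in $\mathsf{NP}$ is immediate, since a dominating set of size at most $k$ is a polynomially checkable certificate. The natural source problem to reduce from is \textsc{Dominating Set} itself (known to be $\mathsf{NP}$-complete on claw-free graphs by Yannakakis and Gavril \cite{Yannakakis}), or a classical problem such as \textsc{Vertex Cover} or \textsc{3-SAT}; the goal of the reduction is to control the diameter without destroying claw-freeness and without changing the answer.

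First I would focus on the base case $d=3$, since the lemma statement signals that this carries the main difficulty. Starting from an arbitrary claw-free instance $H$ (which may have large or unbounded diameter), the idea is to attach a gadget that forces the diameter down to exactly $3$. A standard device is to introduce a small ``hub'' structure --- but a single universal vertex would create claws wherever three pairwise non-adjacent vertices of $H$ share it as a neighbor, so one cannot simply add one dominator. Instead I would add a \emph{clique} of apex vertices (or a short path / cycle of cliques) joined to $H$ in a balanced way, so that any two original vertices become connected by a path of length at most $3$ through the gadget, while every neighborhood remains a union of at most two cliques (the local condition guaranteeing claw-freeness). The apex clique should also be cheap to dominate, so that the optimal dominating set essentially reduces to a dominating set of $H$ plus a constant number of gadget vertices; this yields an equivalence of the form $\gamma(G)\le k+c \iff \gamma(H)\le k$ for the appropriate constant $c$.

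The main obstacle, and the step I expect to consume most of the work, is the simultaneous verification of the two structural invariants after the gadget is attached: (i) \textbf{claw-freeness}, which requires checking that no vertex --- original, apex, or gadget --- acquires three pairwise non-adjacent neighbors, so the edges between $H$ and the gadget must be chosen so that each vertex's neighborhood stays covered by two cliques; and (ii) \textbf{diameter exactly $d$}, meaning one must verify both the upper bound (every pair is within distance $d$) and the lower bound (some pair is at distance exactly $d$, so the diameter does not collapse to $2$). Balancing these is delicate because making the graph denser to force claw-freeness tends to shrink the diameter, while spreading vertices out to achieve distance $3$ risks creating claws.

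Finally, to lift the construction from $d=3$ to arbitrary $d\ge 3$, I would append a pendant ``tail'' of fixed length to the gadget --- but again realized with cliques rather than bare edges to preserve claw-freeness --- so that attaching a path-of-cliques of length $d-3$ increases the diameter by exactly $d-3$ while adding a predictable constant to the domination number and keeping every local neighborhood a union of two cliques. The equivalence $\gamma(G)\le k+c' \iff \gamma(H)\le k$ then carries over with an updated constant $c'$ depending only on $d$, which completes the reduction for every fixed $d\ge 3$. I would organize the write-up as: describe the construction, prove claw-freeness, prove the diameter is exactly $d$, prove the domination-number equivalence, and finally note the straightforward extension of the tail for $d>3$.
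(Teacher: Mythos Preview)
Your high-level structure (prove $d=3$ first, then extend by a pendant tail) matches the paper, and your treatment of the extension to $d\ge 4$ is essentially what the paper does. The gap is in the base case.

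You propose to start from an \emph{arbitrary} claw-free instance $H$ and attach an apex clique ``joined to $H$ in a balanced way'' to force diameter $3$. This is where the plan is underspecified and, as stated, unlikely to work. Any vertex $v$ of $H$ may already have $\alpha(G[N_H(v)])=2$; attaching even a single new neighbour $u$ to $v$ that is non-adjacent to $N_H(v)$ immediately creates a claw centred at $v$. To avoid this you would need each apex neighbour of $v$ to be complete to one of the ``two cliques'' covering $N_H(v)$ --- but in a general claw-free graph the neighbourhood of $v$ need not be coverable by two cliques at all (e.g.\ $N_H(v)$ could induce a $C_5$), so your framing in terms of two-clique neighbourhoods is already off, and there is no uniform rule for where to put the apex edges. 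Simultaneously you need enough apex--$H$ edges to pull the diameter down to $3$, which pushes in the opposite direction. You acknowledge this tension but do not resolve it; the construction is the entire content of the lemma.

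The paper sidesteps the difficulty by \emph{not} starting from an arbitrary claw-free graph. It reduces from \textsc{Dominating Set} on cubic graphs and replaces each vertex $v$ by a small fixed gadget $G_v$ whose designated ``attachment'' vertices have a \emph{clique} neighbourhood inside the gadget; all attachment vertices (three per gadget) are then made into one large clique $K$, with a pendant edge $st$ hanging off $K$. Claw-freeness is then checkable locally, because every vertex of $K$ has its non-$K$ neighbours contained in a single triangle of some $G_v$. The domination equivalence is also built into the gadget: each $G_v$ needs either $2$ or $3$ vertices to dominate, with $3$ corresponding exactly to $v$ lying in the dominating set of the original cubic graph, giving $k'=2n+k+1$. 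Your apex-clique idea provides no such per-vertex accounting, so even if claw-freeness could be salvaged, the equivalence $\gamma(G')\le k+c\iff\gamma(H)\le k$ would still need an argument.

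In short, the missing idea is the per-vertex gadget replacement from a bounded-degree source problem, which simultaneously buys claw-freeness at the interface with the big clique and a clean $2$-versus-$3$ domination count inside each gadget.
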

\begin{proof}
    We start with the case $d=3$. We give a polynomial transformation from \textsc{Dominating Set} which is $\mathsf{NP}$-complete for cubic graphs \cite{CubDom}. Let $G=(V,E)$ be a cubic graph and let $n$ be its number of vertices. From $G$ we build a claw-free graph $G'=(V',E')$ with diameter $3$. We replace every vertex $v$ by the subgraph $G_v$ as depicted by Figure \ref{gadgetv}. All the $3n$ vertices inside the three triangles of each $G_v$ (the grey vertices in Figure \ref{gadgetv}) are connected to a vertex $s$, and made pairwise connected to form a clique $K$ on $3n+1$ vertices. Last, a vertex $t$ is connected with $s$. The Figure \ref{constr3} is a simplified representation of $G'$.

    We show that $G'$ is claw-free. From Figure \ref{constr3}, we can see that $s$ and $t$ are not at the center of an induced claw. For each vertex $u$ of the $3n$ grey vertices, their neighborhood can be partition into the clique $K\setminus \{u\}$ and the clique on three white vertices inside a subgraph $G_v$. So no vertices of $K$ are at the center of an induced claw. It remains the white vertices of each subgraph $G_v$. Since the six inner vertices of $G_v$ are not connected to vertices of $G'-G_v$, it follows that they are not at the center of an induced claw. Each of the three outer vertices of $G_v$ are connected to exactly one vertex in $G'-G_v$. Since their neighborhood inside $G_v$ is a clique, it follows that these vertices are not at the center of an induced claw. Therefore $G'$ is claw-free.

    We show that $G'$ has diameter $3$. For each subgraph $G_v$, each of its vertices is connected to a grey vertex of $K$. Since $s\in K$ the vertex $t$ is at distance two to any grey vertex, so at distance three to each white vertex of any $G_v$. Any of the $3n$ grey vertices are at distance at most $2$ from every white vertices of the subgraphs $G_v$ while for any pair of such white vertices the distance is at most three. It follows that $G'$ has diameter $3$.

    \begin{figure}[htbp]
        \begin{center}
            \includegraphics[width=10cm, height=5cm, keepaspectratio=true]{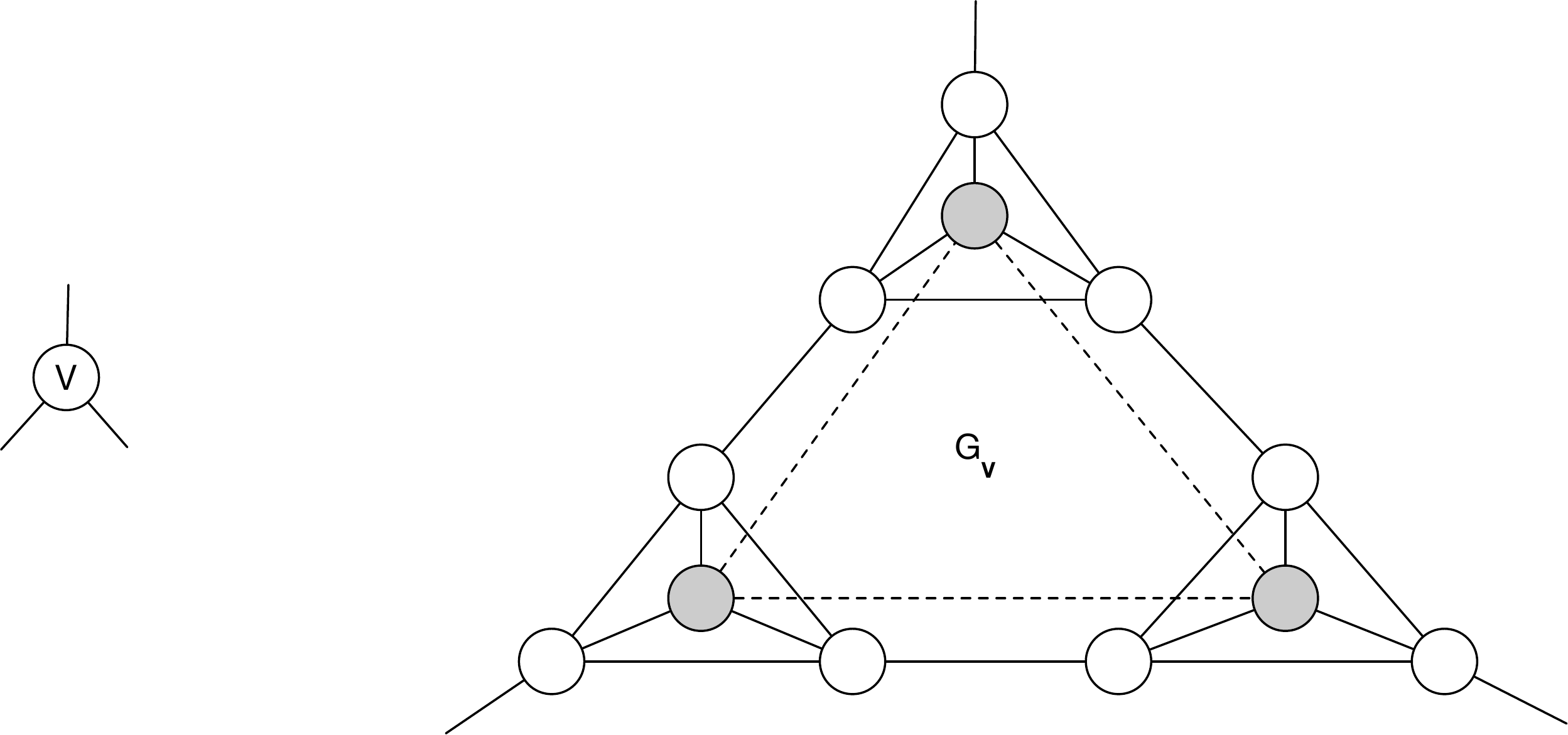}
        \end{center}
        \caption{The gadget associated with a vertex $v$. The grey vertices are in the clique $K$, and their corresponding edges are the dashed lines.}
        \label{gadgetv}
    \end{figure}

    \begin{figure}[htbp]
        \begin{center}
            \includegraphics[width=12cm, height=3cm, keepaspectratio=true]{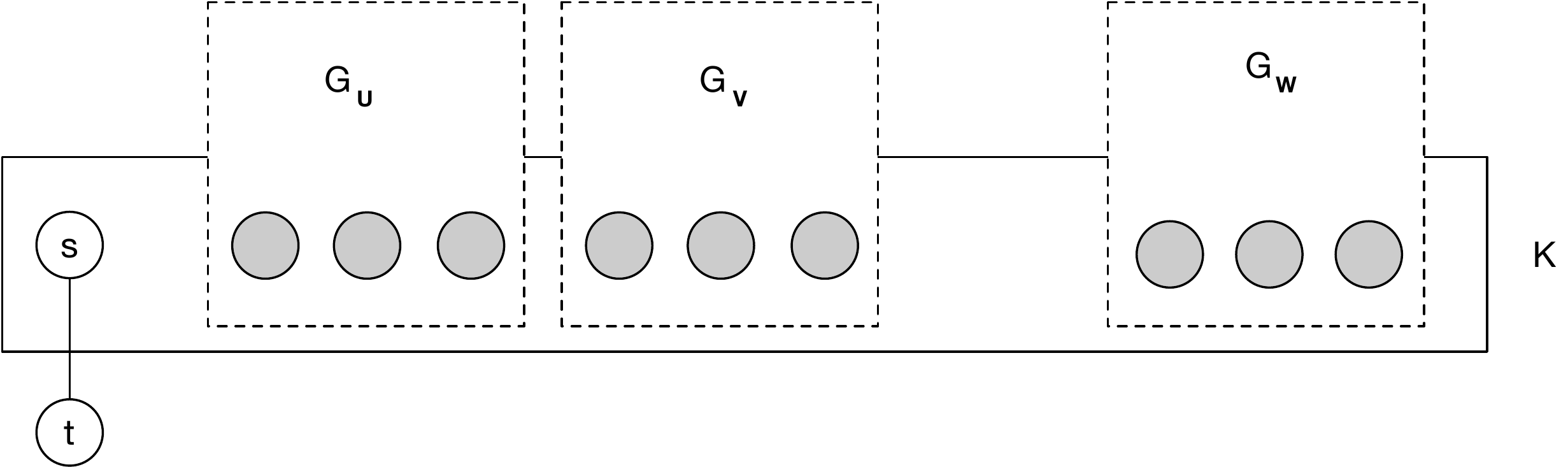}
        \end{center}
        \caption{An outline of $G'$. }
        \label{constr3}
    \end{figure}

    Let $k'=2n+k+1$. First, we show that if $G$ has a dominating set no greater than $k$, then $G'$ has a dominating set no greater than $k'$. From a dominating set $D$ of $G$, we construct a dominating set $D'$ of $G'$. Since $t$ is a leaf, we can add $s$ to $G'$. For each vertex $v\in D$, we add the three outer vertex of its associated subgraph $G_v$ to $D'$ as depicted on the left of Figure \ref{domk=3}. Note that these three vertices dominate $G_v$ and the three vertices of $N(V'(G_v))\setminus V'(G_v)$. Therefore it remains to dominate all the inner vertices of each $G_u$ of $G'$ associated with a vertex $u\in V\setminus D$. To do so, we add two inner vertices of $G_v$ as depicted on the right of Figure \ref{domk=3}. So if $G$ has a dominating set of size $k$, then $G'$ has a dominating set of size $k'$.

    Conversely, let $D'$ be a dominating set of size less or equal than $k'=2n+k+1$ of $G'$. Since $t$ is a leaf, we can suppose that $s\in D'$. For each subgraph $G_v$  its six inner vertices induce a $C_6$, so $\vert V'(G_v)\cap D'\vert \geq 2$. The three outer vertices dominate $G_v$ (see the left of Figure \ref{domk=3}). Hence $2\leq \vert V'(G_v)\cap D'\vert \leq 3$. Thus there are at most $k$ subgraphs $G_v$ such that $\vert V'(G_v)\cap D'\vert =3$ whereas for the others $\vert V'(G_v)\cap D'\vert =2$. When  $\vert V'(G_v)\cap D'\vert =2$  one of the three outer vertices is not dominated by a vertex of $V'(G_v)\cap D'$ (see the right of Figure \ref{domk=3}), so it is dominated by its neighbor in $G_u,u\ne v,$ where $uv$ is an edge of $G$. So taking $v\in D$ whenever $G_v$ contains three vertices of $D'$ and $v\not\in D$ otherwise, we obtain a dominating set of $G$ with size at most $k$. \\

    \begin{figure}[htbp]
        \begin{center}
            \includegraphics[width=12cm, height=6cm, keepaspectratio=true]{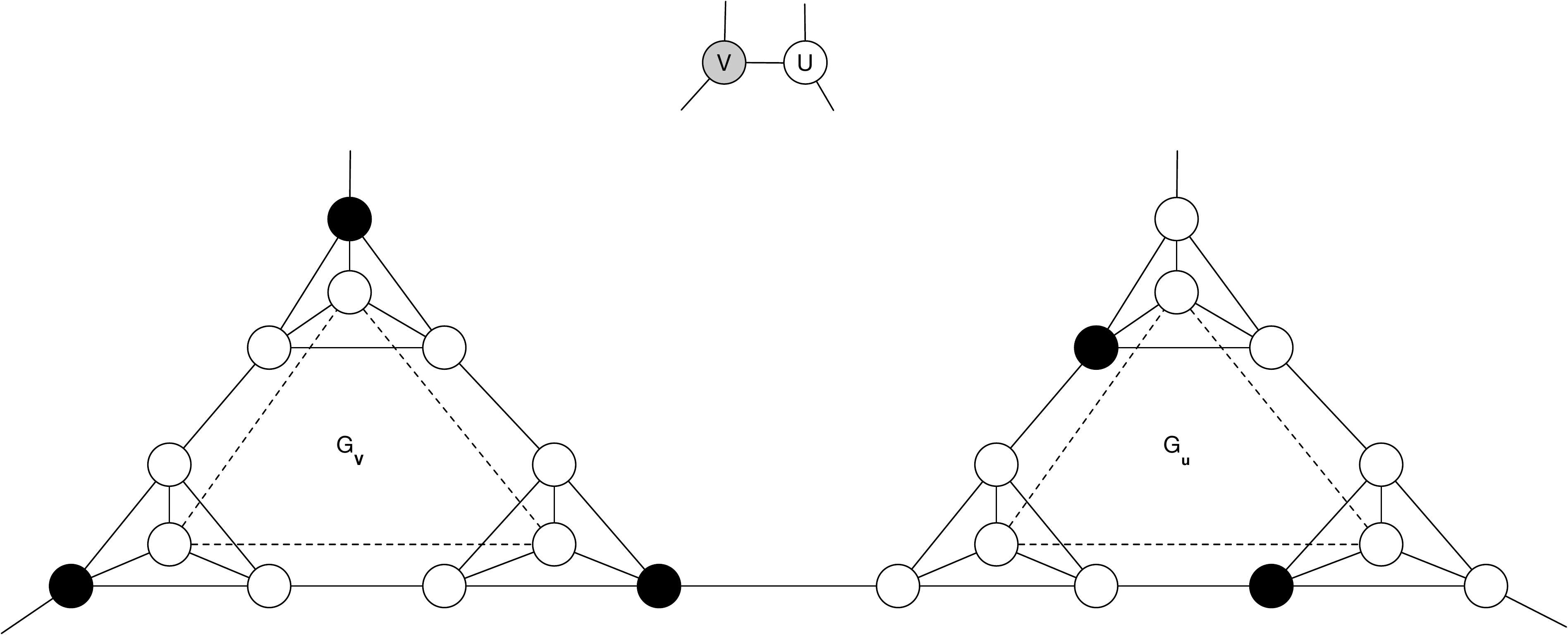}
        \end{center}
        \caption{$v$ is in a dominating set of $G$ and $u$ is not in the dominating set of $G$.}
        \label{domk=3}
    \end{figure}

    Now let $d\ge 4$. The construction of $G'$ is given by the figure \ref{constrd}. Taking $k'=2n+k+1+\lfloor {d\over 3}\rfloor$ the arguments are the same as above.
\end{proof}

    \begin{figure}[H]
        \begin{center}
            \includegraphics[width=12cm, height=3cm, keepaspectratio=true]{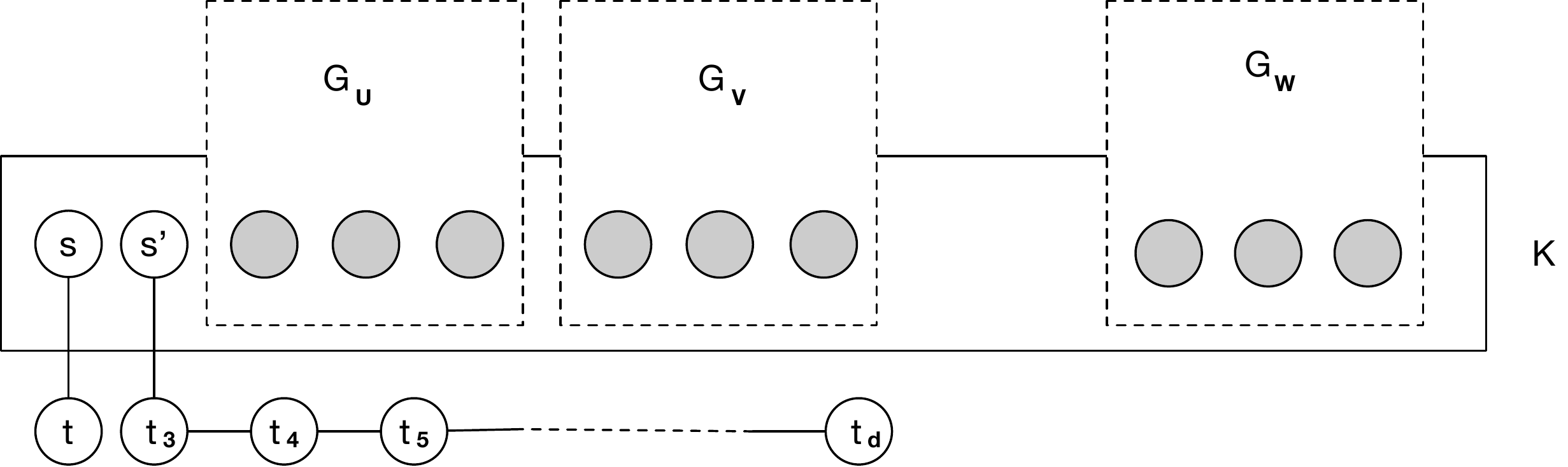}
        \end{center}
        \caption{An outline of $G'$ for $d\geq 4$. }
        \label{constrd}
    \end{figure}

\subsection{Line graphs with diameter $\mathbf 2$}

Given a graph $G$, the \textit{line graph} of $G$, denoted by $L(G)$, has for vertices the edges of $G$, and two vertices of $L(G)$ are adjacent if and only if the corresponding edges are incident in $G$. The class of line graphs can also be defined as the following list of forbidden subgraph:

\begin{theorem}[\cite{Brandstadt} p. 110]\label{linegraph}
    A graph $G$ is a line graph if and only if $G$ has no induced subgraph isomorphic to the nine forbidden graphs of Figure \ref{line_graphs_9forbidden_graphs}.
\end{theorem}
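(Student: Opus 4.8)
The plan is to prove both directions through \emph{Krausz's clique-cover characterization}, which is the natural bridge between line graphs and local clique structure. Recall that $G$ is a line graph if and only if $E(G)$ admits a partition into cliques such that every vertex lies in at most two of them (a Krausz partition): given a root graph $H$ with $L(H)=G$, the cliques are the stars $\{e : v\in e\}$ for $v\in V(H)$, and conversely a Krausz partition lets one reconstruct $H$ by taking one vertex per clique and joining the (at most two) cliques through each vertex of $G$. Since the line graph property is preserved under disjoint unions, I would assume $G$ connected throughout.

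For the easy direction I would check directly that each of the nine graphs fails the Krausz condition. The cleanest instance is $K_{1,3}$: in any line graph the neighborhood of a vertex is covered by the two Krausz cliques through it, so it cannot contain an independent set of size three; hence the claw is not a line graph, and the same clique-cover obstruction disposes of the remaining eight graphs by a short inspection. Because being a line graph is hereditary (for $F\subseteq E(H)$ the induced subgraph $L(H)[F]$ is itself the line graph of the spanning subgraph $(V(H),F)$), no line graph can contain any of the nine as an induced subgraph.

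The substance is the converse: assuming $G$ is connected and contains none of the nine graphs, I would build a Krausz partition. First, claw-freeness (one of the nine is $K_{1,3}$) gives that for every vertex $v$ the induced neighborhood $G[N(v)]$ has independence number at most two. I would then use the remaining forbidden graphs to upgrade this to the statement that $G[N(v)]$ is covered by at most two cliques — equivalently that the complement of each neighborhood is bipartite, the forbidden configurations being exactly the odd induced structures (odd holes through $v$ and the like) that would otherwise survive. The key local notion is that of an \emph{odd} triangle, one having a vertex adjacent to an odd number of its three vertices; I would prove, via the forbidden subgraphs, that no two odd triangles share an edge, so that each edge lies in a well-defined maximal clique of the intended partition and each vertex is forced into at most two such cliques.

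The main obstacle is the global consistency of these local decompositions: turning the per-vertex covers by two cliques into a single edge partition in which the overlaps are controlled. The delicate case analysis concerns triangles and copies of $K_4$ — deciding, when an edge lies in several triangles, which triangles must be amalgamated into one Krausz clique and which must be kept apart — and it is precisely here that essentially all nine forbidden graphs are invoked to rule out the inconsistent configurations (for example two triangles sharing an edge that cannot be assigned coherently). Once the partition is shown to be well defined and to place every vertex in at most two cliques, Krausz's characterization yields a root graph $H$ with $L(H)=G$, completing the proof.
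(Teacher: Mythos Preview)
The paper does not prove this theorem: it is quoted verbatim from \cite{Brandstadt} (Beineke's forbidden-subgraph characterization of line graphs) and used as a black box. There is therefore no ``paper's own proof'' to compare your proposal against.

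That said, your outline follows the standard route to Beineke's theorem via Krausz's clique-partition criterion, and the overall architecture is sound: the easy direction is an inspection, and the hard direction builds a Krausz partition from the hypothesis that none of the nine graphs occurs. Two points where a referee would press you: (i) your claim that ``no two odd triangles share an edge'' is not quite the invariant one proves --- rather, one shows that two triangles sharing an edge are either both odd or both even, and it is this parity coherence that lets you decide, edge by edge, which maximal clique it belongs to; (ii) the passage from ``each $G[N(v)]$ is covered by two cliques'' to a global Krausz partition is where the real work lies, and your sketch defers exactly the case analysis (e.g.\ around $K_4$ and $W_5$-type configurations) that consumes most of Beineke's original argument. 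As a plan it is correct; as a proof it would need those two steps spelled out.
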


\begin{figure}[htbp]
    \begin{center}
        \includegraphics[width=12cm, height=7cm, keepaspectratio=true]{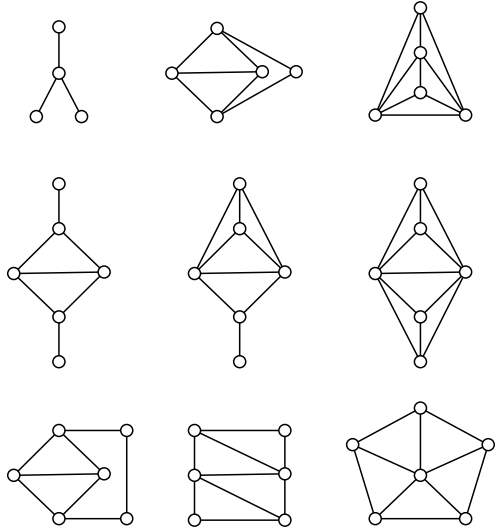}
    \end{center}
    \caption{ $\cal F$ the set of the nine forbidden subgraphs of line graphs.}
    \label{line_graphs_9forbidden_graphs}
\end{figure}

We denote by $\cal F$ the set of these nine forbidden graphs.
Since the line graphs can be defined by a finite set $\cal F$ of forbidden subgraphs, it follows that we can check if a graph is a line graph in polynomial-time. We give below the computational complexity of one of the best known algorithm to recognize line graphs.

\begin{prop}[Lehot \cite{Lehot}]\label{polylinegraph}
    We can test in $\vert E\vert +O(\vert V\vert)$ if a given graph $G=(V,E)$ is a line graph.
\end{prop}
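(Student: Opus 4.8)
The plan is to prove the statement by exhibiting a linear-time algorithm that, given $G=(V,E)$, either produces a \emph{root graph} $R$ with $L(R)\cong G$ (a certificate that $G$ is a line graph) or reports that none exists. The conceptual backbone is the Krausz characterization: $G$ is a line graph if and only if its edge set can be partitioned into cliques so that every vertex lies in at most two of them. Such a partition is exactly a description of $G$ as a line graph: each clique becomes a vertex of $R$, and each vertex $x$ of $G$ becomes the edge of $R$ joining the (at most two) cliques that contain $x$, with a pendant vertex supplied when $x$ lies in only one clique. By Whitney's theorem the root graph is unique up to isomorphism, the single exception being $K_3=L(K_3)=L(K_{1,3})$; so apart from a bounded list of small graphs the reconstruction is forced, which is what makes a local, greedy procedure correct.

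First I would dispose of the easy structural reductions. The components of $G$ can be treated independently, and components with at most a constant number of vertices (in particular $K_3$) are matched against a fixed lookup table. Then, assuming $G$ is connected with enough vertices, I would pick a vertex $w$ of minimum degree $\delta$. In a line graph the closed neighborhood $N[w]$ of any vertex is covered by the two cliques corresponding to the two endpoints of the edge $w$ represents; hence the task at $w$ reduces to splitting $N(w)$ into at most two cliques. Using the fact that $w$ has minimum degree one can pin down a first clique $C_1\ni w$ (its other members being forced by adjacency inside $N[w]$), after which the remaining neighbors must form the second clique $C_2$. If $N(w)$ admits no such splitting, or if a neighbor would be driven into a third clique, then $G$ is not a line graph and we reject.

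From this seed I would propagate the clique partition. Each newly created clique fixes, for its member vertices, one of their two allowed cliques; scanning the neighborhoods of these vertices determines the complementary cliques, and so on, in breadth-first fashion until every vertex has been assigned to one or two cliques. The output is the candidate root graph $R$ built as above. Because Whitney uniqueness fails only for $K_3$, the propagation is deterministic wherever it is applied, and the small-case table absorbs the one ambiguous instance. Finally I would \emph{verify} the candidate: check that every edge of $G$ lies in exactly one of the cliques, that every vertex lies in at most two, and that each clique is genuinely complete; equivalently, confirm $L(R)\cong G$. This verification is what converts a heuristic reconstruction into a decision procedure, since an arbitrary non-line-graph may still survive the greedy phase.

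The complexity budget is the delicate point. Each edge of $G$ must be touched only a constant number of times, so the neighborhood splittings, the propagation, and the final verification all have to be implemented with adjacency lists, vertex marking, and charging arguments that spend $O(1)$ work per incident edge and $O(1)$ extra per vertex, yielding the claimed $|E|+O(|V|)$ bound. I expect the main obstacle to be precisely this: performing the split of each neighborhood into two cliques and the clique-completeness check \emph{locally} and without rescanning, rather than by a naive pairwise test that would cost $\Theta\bigl(\sum_v \deg(v)^2\bigr)$. Handling the minimum-degree seed correctly, together with the exceptional small graphs where the root graph is not unique, is where the careful bookkeeping in Lehot's argument is concentrated.
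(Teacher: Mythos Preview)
The paper does not prove this property; it is stated as a cited result from Lehot~\cite{Lehot} and used as a black box (see the sentence immediately preceding the statement: ``We give below the computational complexity of one of the best known algorithm to recognize line graphs''). There is therefore no proof in the paper to compare your proposal against.

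Your sketch is a reasonable high-level account of the ideas behind Lehot's algorithm---the Krausz clique-partition characterization, Whitney's uniqueness theorem with the $K_3$ exception, local reconstruction of the root graph followed by verification---and nothing in it is wrong as an outline. But for the purposes of this paper no proof is expected or given; the appropriate response here is simply to cite the result, as the authors do.
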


Martin et al. \cite{Martin} (Lemma 11) showed that line graphs are closely related to $2P_2$-free graphs (i.e. $2K_2$-free graphs) with the following property:

\begin{prop}[Martin et al. \cite{Martin}]\label{diam2tok2}
    Let $G$ be a graph that is neither a triangle nor a star. Then $L(G)$ has diameter 2 if and only if $G$ is $2K_2$-free.
\end{prop}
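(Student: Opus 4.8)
The plan is to translate distances in $L(G)$ directly into incidence conditions on the edges of $G$, and then read $2K_2$-freeness off of them. Write $e,f$ for two distinct vertices of $L(G)$, that is, two distinct edges of $G$, say $e=ab$ and $f=cd$. First I would record the following distance dictionary. The edges $e,f$ are adjacent in $L(G)$ (distance $1$) exactly when they share an endpoint. If instead they are disjoint (all of $a,b,c,d$ distinct), then a common neighbour of $e$ and $f$ in $L(G)$ is precisely an edge of $G$ meeting both, i.e. one of the four ``cross'' edges $ac,ad,bc,bd$; hence $d_{L(G)}(e,f)=2$ iff at least one of these cross edges is present, and $d_{L(G)}(e,f)\ge 3$ (possibly $\infty$, if $e$ and $f$ lie in different components of $G$) iff none of them is present. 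This last condition says exactly that $\{a,b,c,d\}$ carries the edges $ab$ and $cd$ and no other edge, that is, an induced $2K_2$.

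With this dictionary the central equivalence ``every pair of vertices of $L(G)$ is at distance at most $2$ iff $G$ is $2K_2$-free'' is immediate in both directions. If $G$ contains an induced $2K_2$ on $\{a,b,c,d\}$ with edges $ab,cd$, then the corresponding vertices of $L(G)$ are non-adjacent and have no common neighbour, so their distance is at least $3$ and $L(G)$ cannot have diameter $2$. Conversely, if $G$ is $2K_2$-free, then for any two disjoint edges $ab,cd$ some cross edge exists, giving a path of length $2$ in $L(G)$; together with the distance-$1$ case this shows that every pair of vertices of $L(G)$ is at distance at most $2$.

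It then remains to separate ``all distances at most $2$'' from ``diameter exactly $2$'', and this bookkeeping is where the hypothesis that $G$ is neither a triangle nor a star enters; I expect it to be the only delicate point. The relevant classical fact is that $L(G)$ is a complete graph if and only if every two edges of $G$ share an endpoint, which by a short Helly-type argument on edges happens precisely when the edges of $G$ form a star or a triangle. Thus, assuming $G$ is neither (and has at least two edges, so that $L(G)$ has at least two vertices), $L(G)$ is not complete: there exist two non-adjacent vertices of $L(G)$, i.e. two disjoint edges of $G$, and by the dictionary together with $2K_2$-freeness they lie at distance exactly $2$. Hence $L(G)$ has diameter exactly $2$. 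Combining the two directions yields the stated equivalence, the excluded cases (triangle and star) being exactly those for which $L(G)$ would be complete, hence of diameter $1$, even though they are trivially $2K_2$-free.
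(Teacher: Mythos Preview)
The paper does not give its own proof of this proposition: it is quoted from Martin et al.\ \cite{Martin} (their Lemma~11) and used as a black box. So there is no in-paper argument to compare against.

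Your argument is correct and is the standard direct one. The distance dictionary is exactly right: for disjoint edges $ab$ and $cd$, a common neighbour in $L(G)$ is an edge with one endpoint in $\{a,b\}$ and one in $\{c,d\}$, hence necessarily one of the four cross edges; thus $d_{L(G)}(ab,cd)\ge 3$ precisely when $\{a,b,c,d\}$ induces a $2K_2$. The characterisation of when $L(G)$ is complete (every two edges of $G$ meet, forcing a star or a triangle) is also the classical one, and your Helly-type sketch is fine.

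One small wrinkle worth noting, which is a looseness in the statement rather than in your method: your sentence ``assuming $G$ is neither \dots, $L(G)$ is not complete'' is not literally true if $G$ is allowed isolated vertices. For instance $G=K_{1,3}+K_1$ is $2K_2$-free, is strictly neither a triangle nor a star, yet $L(G)=K_3$ has diameter~$1$. The fix is to read the hypothesis as ``$G$ has no isolated vertices'' (equivalently, $G$ is the root graph one reconstructs from $L(G)$), which is the natural convention in this setting and is harmless for the application in the paper since isolated vertices of $G$ do not appear in $L(G)$.
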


From Yannakakis and Gavril \cite{Yannakakis} we know that a minimum maximal matching of $G$ corresponds to a minimum dominating set of $L(G)$.

Therefore, from Property \ref{diam2tok2}, we can deduce the complexity of \textsc{Dominating Set} for line graphs with diameter 2 from the complexity of computing a minimum maximal matching in $2K_2$-free graphs. It is know from Yannakakis and Gavril \cite{Yannakakis} that computing a minimum maximal matching is an $\textsf{NP}$-complete problem (even for cubic graphs). We show that we can compute a minimum maximal matching for $2K_2$-free graphs in polynomial time.



Before we start on that, we will need the following two results. The first result is the notourious theorem of Hall on matching of bipartite graphs.

\begin{theorem}[Hall's Theorem \cite{Bondy}]\label{Hall}
    A bipartite graph $G=(X\cup Y,E)$ has a matching that covers all vertices of $X$ if and only if

    \[\vert N(S)\vert \geq \vert S\vert \text{\ \ \ for every $S\subseteq X$}\]
\end{theorem}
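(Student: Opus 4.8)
The plan is to prove the two implications separately, with essentially all the work in the sufficiency direction. For necessity, I would argue in one line: if $M$ is a matching saturating $X$, then for any $S\subseteq X$ the edges of $M$ incident to $S$ assign each vertex of $S$ a distinct partner in $Y$, and every such partner lies in $N(S)$, whence $\vert N(S)\vert\ge \vert S\vert$. For sufficiency I would induct on $\vert X\vert$. The base case $\vert X\vert=1$ is immediate, since Hall's condition forces the unique vertex of $X$ to have a neighbor, which we match to it. For the inductive step I would split into two cases according to whether Hall's condition holds with strict slack on every proper subset.

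Case 1, when every nonempty $S\subsetneq X$ satisfies $\vert N(S)\vert\ge \vert S\vert+1$: here I would pick an arbitrary $x\in X$ and a neighbor $y\in N(x)$, match $xy$, and delete both vertices. In the resulting graph $G'$ on $(X\setminus\{x\})\cup(Y\setminus\{y\})$, deleting $y$ lowers each neighborhood by at most one, so for every $S\subseteq X\setminus\{x\}$ we have $\vert N_{G'}(S)\vert\ge \vert N_G(S)\vert-1\ge \vert S\vert$. Hall's condition persists, the induction hypothesis yields a matching saturating $X\setminus\{x\}$, and restoring $xy$ saturates $X$.

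Case 2, when some nonempty $S_0\subsetneq X$ is tight, i.e. $\vert N(S_0)\vert=\vert S_0\vert$: I would split $G$ into the part on $S_0\cup N(S_0)$ and the part $G'$ on $(X\setminus S_0)\cup(Y\setminus N(S_0))$. Hall's condition holds in the first part, since neighborhoods of subsets of $S_0$ stay inside $N(S_0)$, so induction gives a matching $M_1$ saturating $S_0$. For $G'$, the key computation is that tightness of $S_0$ forces $\vert N_G(S_0\cup T)\vert=\vert S_0\vert+\vert N_{G'}(T)\vert$ for every $T\subseteq X\setminus S_0$, while Hall's condition applied to $S_0\cup T$ gives $\vert N_G(S_0\cup T)\vert\ge \vert S_0\vert+\vert T\vert$; subtracting yields $\vert N_{G'}(T)\vert\ge \vert T\vert$. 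Induction then produces a matching $M_2$ saturating $X\setminus S_0$, and $M_1\cup M_2$ saturates $X$ since the two parts use disjoint vertex sets.

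The main obstacle is the verification in Case 2 that Hall's condition is inherited by the residual instance $G'$: one must check that the $G'$-neighborhood of a subset $T\subseteq X\setminus S_0$ is exactly its $G$-neighbors lying outside $N(S_0)$, which is precisely where the tightness $\vert N(S_0)\vert=\vert S_0\vert$ is exploited. An alternative route would start from any matching and, whenever some $x\in X$ is unsaturated, use Hall's condition to exhibit an $M$-augmenting path out of $x$, contradicting maximality of the matching size unless $X$ is already saturated. I prefer the induction above, as it is entirely self-contained and avoids developing the theory of alternating and augmenting paths.
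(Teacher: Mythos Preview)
Your proof is correct and is the standard inductive proof of Hall's Theorem. Note, however, that the paper does not supply its own proof of this statement: Theorem~\ref{Hall} is quoted from \cite{Bondy} as a classical result and is used only as a tool inside the proof of Lemma~\ref{MMM2k2}. There is thus nothing in the paper to compare your argument against. If anything, your write-up is more than what the paper requires; a one-line citation would have sufficed.
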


The second result is of Farber \cite{Farber} who showed that $C_4$-free graphs have a polynomial number of cliques.

\begin{prop}[Farber \cite{Farber}]\label{poly_cliques_C4_free}
    If $G=(V,E)$ is a $C_4$-free graph, then $G$ has at most $\binom{\vert V\vert }{2}$ cliques.
\end{prop}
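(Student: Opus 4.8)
Throughout I read the statement as a bound on the number of \emph{maximal} cliques of $G$: the total number of cliques can be exponential (for instance $K_{n}$ is $C_4$-free in the induced sense yet has $2^{n}-1$ cliques), whereas it is the maximal cliques that the later enumeration step needs. The plan is to produce an injective map from the set of maximal cliques of $G$ into the set of unordered pairs of distinct vertices; since there are $\binom{\vert V\vert}{2}$ such pairs, injectivity gives the claimed bound at once.

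The one structural fact I would extract from $C_4$-freeness is: if $u,v$ are non-adjacent then $N(u)\cap N(v)$ induces a clique, for otherwise two non-adjacent common neighbours $a,b$ would make $\{u,a,v,b\}$ an induced $C_4$. Equivalently, passing to the complement $\overline{G}$, which is then $2K_2$-free, a maximal clique of $G$ is exactly a maximal independent set of $\overline{G}$, so the statement is the same as: a $2K_2$-free graph on $\vert V\vert$ vertices has at most $\binom{\vert V\vert}{2}$ maximal independent sets. I would work in whichever of the two formulations is more convenient.

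To build the injection I would fix a linear order on $V$ and attach to each maximal clique a canonical pair of vertices that records where it differs from its competitors: for a maximal clique $Q$ I would take a distinguished vertex (such as $\min Q$) together with a second, adjacency-determined coordinate, for instance an extremal vertex of $N(\min Q)$ relative to $Q$. I would then suppose two distinct maximal cliques $Q\neq Q'$ receive the same label and read off, from the equality of both coordinates, a list of forced non-adjacencies. Combining these with a pair of ``private'' vertices, one taken from $Q\setminus Q'$ and one from $Q'\setminus Q$ and supplied by the maximality of the two cliques, the goal is to exhibit two non-adjacent vertices that have two non-adjacent common neighbours, i.e.\ an induced $C_4$ — the desired contradiction. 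Isolated vertices, which give singleton maximal cliques, are the only ones needing separate bookkeeping and are easily absorbed for $\vert V\vert\ge 3$.

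The hard part is precisely the choice of the second coordinate so that the map is genuinely injective, and I expect this to be where essentially all the work lives. Naive positional rules all fail: encoding $Q$ by its two smallest vertices collapses on a ``diamond'' whose shared edge is its two smallest vertices, the two-largest rule fails on the symmetric diamond, the $(\min,\max)$ rule fails on the diamond whose shared edge is $\{\min Q,\max Q\}$, and rules based on $\min Q$ together with its first excluded neighbour fail on a star $K_{1,p}$. The genuine obstacle is the degenerate situation in which a branching vertex happens to be adjacent to the shared part of the two cliques: this adjacency is exactly a chord that destroys the would-be $4$-cycle, so the induced $C_4$ cannot be produced by the crude argument and the encoding must be refined to rule such collisions out. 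Showing that $C_4$-freeness (equivalently $2K_2$-freeness of $\overline{G}$) forbids every such degenerate collision is the crux; once injectivity is secured the bound $\binom{\vert V\vert}{2}$ follows immediately.
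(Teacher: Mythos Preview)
The paper does not supply its own proof of this proposition: it is quoted as a result of Farber and used as a black box (to pass to Corollary~\ref{poly_stable_2K2} and then to the enumeration of maximal stable sets in $2K_2$-free graphs). So there is nothing in the paper to compare your argument against line by line.

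That said, your proposal is not a proof but a plan, and you already identify the gap yourself. You set out to build an injection from maximal cliques to $\binom{V}{2}$ and then, after listing several natural two-coordinate encodings (two smallest vertices, $(\min,\max)$, $\min$ plus first excluded neighbour) and showing each of them fails on a small example, you stop with ``the hard part is precisely the choice of the second coordinate \ldots\ once injectivity is secured the bound follows''. In other words, the one nontrivial step --- producing a labelling rule and proving it injective --- is never carried out. Your diagnosis of the obstacle (a chord killing the would-be $C_4$ in the collision argument) is accurate, but nothing in the write-up explains how $C_4$-freeness rules these collisions out, and as your own counterexamples show, it does not for the rules you tried.

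Two remarks that may help close the gap. First, your reading of ``cliques'' as ``maximal cliques'' is the right one; the paper only needs the polynomial bound on \emph{maximal} independent sets of $\overline{G}$ for the enumeration in Property~\ref{poly_maxstabl}. Second, rather than an ad~hoc positional label, a cleaner route is to show that in a $2K_2$-free graph every maximal stable set $S$ can be \emph{recovered} from a suitable pair $u,v\in S$ via $S=\{\,w: uw\notin E,\ vw\notin E\,\}$; the point is not that \emph{every} pair in $S$ works (your instinct is right that it need not), but that \emph{some} pair does, and exhibiting such a pair directly gives the injection without any ordering tricks. Proving existence of that pair is where $2K_2$-freeness is actually used.
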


It follows that the graphs of its complement class, which is the class of $2K_2$-free graphs, have a polynomial number of maximal stable sets.

\begin{coro}\label{poly_stable_2K2}
    If $G=(V,E)$ is a $2K_2$-free graph, then $G$ has at most $\binom{\vert V\vert }{2}$ stable sets.
\end{coro}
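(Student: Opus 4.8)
The plan is to obtain the bound by complementation, transporting Farber's clique count (Proposition~\ref{poly_cliques_C4_free}) from the complement graph $\overline{G}$ back to $G$. Two elementary dualities drive the argument: one between stable sets and cliques, and one between the forbidden configurations $2K_2$ and $C_4$.

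First I would record the standard fact that a set $S\subseteq V$ is a stable set of $G$ if and only if $S$ is a clique of $\overline{G}$, and that this correspondence preserves inclusion, so that maximal stable sets of $G$ are in bijection with maximal cliques of $\overline{G}$. Next I would verify the structural identity $\overline{2K_2}=C_4$. Labelling the four vertices so that $2K_2$ has edge set $\{v_1v_2,\,v_3v_4\}$, its complement on the same vertex set has edge set $\{v_1v_3,\,v_1v_4,\,v_2v_3,\,v_2v_4\}$, which is the four-cycle $v_1v_3v_2v_4v_1$, i.e.\ $C_4$. Since passing to induced subgraphs commutes with complementation, it follows that $G$ is $2K_2$-free if and only if $\overline{G}$ is $C_4$-free.

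Combining the two observations, $\overline{G}$ is a $C_4$-free graph on $\vert V\vert$ vertices, so Proposition~\ref{poly_cliques_C4_free} yields that $\overline{G}$ has at most $\binom{\vert V\vert}{2}$ maximal cliques. Pulling this back through the stable-set/clique duality gives that $G$ has at most $\binom{\vert V\vert}{2}$ maximal stable sets, which is the claimed bound.

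There is no genuine obstacle here: once the identity $\overline{2K_2}=C_4$ is in hand, the corollary is a one-line transfer of Farber's proposition across complementation. The only point worth a word of caution is that the relevant quantity is the number of \emph{maximal} stable sets, in parallel with Farber's count of maximal cliques; the total number of stable sets of a $2K_2$-free graph can of course be exponential, as the edgeless graph already shows.
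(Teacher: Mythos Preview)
Your proposal is correct and matches the paper's approach exactly: the paper does not write out a separate proof but simply observes that $2K_2$-free graphs form the complement class of $C_4$-free graphs, so Farber's clique bound transfers directly. Your remark that the statement should be read as bounding \emph{maximal} stable sets (and likewise maximal cliques in Proposition~\ref{poly_cliques_C4_free}) is a welcome clarification, since the edgeless graph and the complete graph already show the unrestricted counts are exponential.
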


In \cite{2K2maxind} Dhanalakshmi et al. used this last result to prove the following property.

\begin{prop}[Dhanalakshmi et al. \cite{2K2maxind}]\label{poly_maxstabl}
    If $G=(V,E)$ is a $2K_2$-free graph, then there exists an algorithm for enumerating all maximal stable sets, which runs in polynomial-time.
\end{prop}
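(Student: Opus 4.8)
The plan is to combine the polynomial bound of Corollary~\ref{poly_stable_2K2} with an incremental, output-sensitive enumeration scheme, the key point being that $2K_2$-freeness is a hereditary property: every induced subgraph of a $2K_2$-free graph is again $2K_2$-free, so by Corollary~\ref{poly_stable_2K2} \emph{every} induced subgraph of $G$ has at most $\binom{|V|}{2}$ maximal stable sets. This is what prevents any intermediate family we manipulate from blowing up, and it is the crux of the polynomial running time.

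Concretely, I would fix an ordering $v_1,\dots,v_n$ of $V$, set $G_i=G[\{v_1,\dots,v_i\}]$, and maintain the family $\mathcal{M}_i$ of all maximal stable sets of $G_i$, starting from $\mathcal{M}_1=\{\{v_1\}\}$. To pass from $\mathcal{M}_i$ to $\mathcal{M}_{i+1}$, I process each $S\in\mathcal{M}_i$: if $v_{i+1}$ has no neighbor in $S$ then $S\cup\{v_{i+1}\}$ is already a maximal stable set of $G_{i+1}$; if $v_{i+1}$ has a neighbor in $S$ then $S$ itself remains maximal in $G_{i+1}$, while the stable set $\bigl(S\setminus N(v_{i+1})\bigr)\cup\{v_{i+1}\}$ can be greedily completed to a maximal stable set of $G_{i+1}$ containing $v_{i+1}$. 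After processing $i=n$ we obtain $\mathcal{M}_n$, the family of all maximal stable sets of $G$. The delicate point of correctness is that this really produces \emph{every} maximal stable set $T$ of $G_{i+1}$: for $T\ni v_{i+1}$ one checks that every vertex of $G_i$ not dominated by $T\setminus\{v_{i+1}\}$ must be a neighbor of $v_{i+1}$ (since $T$ is maximal in $G_{i+1}$), so $T\setminus\{v_{i+1}\}$ extends to some $S\in\mathcal{M}_i$ with $S\setminus N(v_{i+1})=T\setminus\{v_{i+1}\}$, and this $S$ regenerates $T$.

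The main obstacle, and the only place where $2K_2$-freeness is essential, is the running time rather than correctness: the very same scheme is correct for an arbitrary graph, but there $|\mathcal{M}_i|$ may grow exponentially, and moreover a single maximal stable set of $G_{i+1}$ can be generated from several members of $\mathcal{M}_i$, producing duplicates. Both difficulties are dissolved by the hereditary bound above: we have $|\mathcal{M}_i|\le\binom{i}{2}$ for every $i$, so storing each $\mathcal{M}_i$ in a dictionary and discarding any candidate already present costs only polynomial time. Each of the $O(|\mathcal{M}_i|)=O(n^2)$ candidates produced at step $i$ is formed, completed and tested in time polynomial in $n$, and there are $n$ steps, whence the whole enumeration runs in time polynomial in $n$. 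Equivalently, one may simply invoke any classical polynomial-delay algorithm for enumerating maximal independent sets: since Corollary~\ref{poly_stable_2K2} bounds the number of outputs by a polynomial, polynomial delay immediately yields total polynomial time.
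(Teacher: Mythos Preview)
Your argument is correct. Note, however, that the paper does not give its own proof of this proposition: it is stated as a result of Dhanalakshmi et al.\ and simply cited, so there is no in-paper proof to compare your approach against.

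Both routes you sketch are valid. The incremental scheme (essentially the Tsukiyama--Ide--Ariyoshi--Shirakawa method) is correctly analysed: the hereditary bound $|\mathcal{M}_i|\le\binom{i}{2}$ from Corollary~\ref{poly_stable_2K2} is precisely what keeps every intermediate family polynomial, and your correctness check for the case $v_{i+1}\in T$ is right---any maximal extension $S\supseteq T\setminus\{v_{i+1}\}$ in $G_i$ must satisfy $S\setminus N(v_{i+1})=T\setminus\{v_{i+1}\}$, so $(S\setminus N(v_{i+1}))\cup\{v_{i+1}\}$ already equals $T$ and the greedy completion leaves it unchanged. Your closing remark is in fact the cleanest argument: any off-the-shelf polynomial-delay algorithm for enumerating maximal independent sets, combined with the polynomial bound on the number of outputs from Corollary~\ref{poly_stable_2K2}, immediately yields total polynomial time, without the need for the detailed incremental analysis.
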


We are ready to prove our result on minimum maximal matching for $2K_2$-free graphs. But to do so, we prove a more general result, that is:

\begin{lemma}\label{MMM2k2}
    Let $\mathcal{C}$ be the class of graphs with a polynomial number of maximal stable sets that can be enumerated in polynomial-time.

    Computing a minimum maximal matching for every $G=(V,E)\in \mathcal{C}$ can be done in polynomial-time.
\end{lemma}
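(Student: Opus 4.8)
The plan is to reduce the problem to enumerating maximal stable sets (which is possible in polynomial time by the very definition of $\mathcal{C}$) together with one matching computation per enumerated set. The starting observation is the standard correspondence between maximal matchings and stable sets: a matching $M$ of $G$ is maximal if and only if the set $U=V\setminus V(M)$ of vertices it leaves exposed is a stable set, since otherwise an edge inside $U$ could be added to $M$. Because $|M|=(|V|-|U|)/2$, minimizing $|M|$ is the same as maximizing $|U|$. Hence, if $M^\ast$ is a minimum maximal matching with exposed set $U^\ast$, then $U^\ast$ is a stable set, and I would extend it to a maximal stable set $S^\ast\supseteq U^\ast$.

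This suggests the following algorithm. First I would enumerate all maximal stable sets $S_1,\dots,S_t$ of $G$; by the hypothesis on $\mathcal{C}$ there are polynomially many and they can be listed in polynomial time. For each maximal stable set $S$ I would compute a matching $M_S$ of minimum cardinality that saturates every vertex of $T:=V\setminus S$ (discarding $S$ if no such matching exists), and finally output the smallest matching obtained over all $S_i$. Each $M_S$ is automatically a maximal matching, since its exposed vertices lie inside the stable set $S$, so no edge can be added to it. As there are polynomially many sets $S_i$ and each subproblem will be solved in polynomial time, the whole procedure runs in polynomial time.

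For correctness I would argue that this procedure returns a minimum maximal matching. Any $M_S$ produced is a maximal matching, so $|M_S|\ge|M^\ast|$. Conversely, take the maximal stable set $S^\ast\supseteq U^\ast$ from the first paragraph. Since the exposed set of $M^\ast$ is exactly $U^\ast\subseteq S^\ast$, the matching $M^\ast$ saturates all of $V\setminus S^\ast$, so when the algorithm processes $S^\ast$ it finds a saturating matching of size at most $|M^\ast|$. Combining the two inequalities shows that the output has size exactly $|M^\ast|$, hence is a minimum maximal matching.

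The hard part will be the per-$S$ subproblem: computing, in polynomial time, a minimum-cardinality matching saturating a fixed set $T$. Writing $e_{TT}$ for the number of edges of such a matching with both ends in $T$, one has $|M|=|T|-e_{TT}$, so minimizing $|M|$ amounts to maximizing the number of edges matched inside $G[T]$, subject to the constraint that the $T$-vertices left exposed inside $G[T]$ can still be matched to pairwise distinct vertices of $S$. Here the maximality of $S$ is useful, because every vertex of $T$ has at least one neighbor in $S$, and Hall's Theorem (Theorem \ref{Hall}) characterizes when a given set of exposed $T$-vertices admits such a system of distinct representatives in $S$. The genuine difficulty is that different maximum matchings of $G[T]$ leave different exposed sets, only some of which satisfy Hall's condition, so I cannot simply fix one maximum matching of $G[T]$ and extend it. I would resolve this by phrasing the whole subproblem as a single maximum-weight matching in the graph obtained from $G$ by deleting the edges inside $S$, assigning weight $1$ to the edges inside $T$ and a small positive weight to the edges between $T$ and $S$, while insisting that $T$ be saturated; this is solvable in polynomial time by Edmonds' matching algorithm, and an optimal solution simultaneously maximizes the internal edges and produces a valid assignment of the remaining $T$-vertices into $S$.
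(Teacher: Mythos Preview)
Your argument is correct and shares the paper's high-level strategy: enumerate the polynomially many maximal stable sets $S$, for each one find the smallest maximal matching whose exposed set lies in $S$ (equivalently, a minimum-size matching saturating $T=V\setminus S$), and return the overall minimum. Your correctness proof via $S^\ast\supseteq U^\ast$ is clean and matches the paper's opening structural observation.

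Where you diverge is in the per-$S$ subroutine. The paper does \emph{not} solve ``minimum matching saturating $T$'' directly. Instead it takes a \emph{maximum} matching $\mu$ of $G[T]$; if the exposed set $T(\mu)$ admits a system of distinct representatives in $S$ (``fair''), it outputs $\mu$ plus that bipartite extension, of size $\Theta(S)$. If not, it uses Hall's Theorem to extract a deficient set $T'\subseteq T(\mu)$, builds from it a strictly larger maximal stable set $S_1$ with $\Theta(S_1)<\Theta(S)$, and iterates. So the paper stays entirely within cardinality matching plus Hall's Theorem, at the cost of an extra iteration and a somewhat delicate monotonicity argument. Your route---a single weighted-matching call per $S$---is conceptually simpler and avoids the iteration, at the cost of invoking the heavier machinery of Edmonds' weighted algorithm.

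One point you should tighten. With your weights ($1$ inside $T$, $\varepsilon$ between $T$ and $S$), an \emph{unconstrained} maximum-weight matching need not saturate $T$: if the exposed set of a maximum matching of $G[T]$ cannot be fully matched into $S$, the weight-$1$ edges still dominate and the optimum may leave $T$-vertices exposed. You therefore rely on the clause ``while insisting that $T$ be saturated,'' which is a genuine side constraint, not something Edmonds' algorithm handles directly. It is polynomial---e.g., attach a pendant dummy to each vertex of $S$ and compute a maximum-weight \emph{perfect} matching, or give each edge weight $N$ per $T$-endpoint plus $1$ extra for $T$--$T$ edges with $N>|T|$ and run unconstrained---but you should spell out the reduction rather than leave it implicit.
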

\begin{proof}
    We give the structure of $G$ relatively to a minimum maximal matching $M_S$ of $G$.
    Let $Q_S$ be the vertices not covered by $M_S$. Since $M_S$ is maximal, it follows that $Q_S$ is either empty or a stable set. Therefore there is a maximal stable set $S$ of $G$ such that $Q_S\subseteq S$. Let $\mu'\subseteq M_S$ be the edges of $M_S$ with one endpoint in $S$, and $\mu = M_S\setminus \mu'$ be the remaining edges of $M_S$. Let $T(\mu)$ be the vertices of $V\setminus S$ not covered by $\mu$, that is, $T(\mu)=\{v\in N\setminus S \mid uv\in \mu'\}$.
    We claim that $T(\mu)$ is a stable set. For suppose not; then there is an edge $v_1v_2$ where $v_1,v_2 \in T(\mu)$. Since $u_1v_1,u_2v_2\in \mu'$, it follows that $M=(M_S\cup \{v_1v_2\})  \setminus \{u_1v_1,u_2v_2\}$ is a maximal matching with $\vert M\vert<\vert M_S\vert$, a contradiction. So $B_S(\mu)=G[T(\mu)\cup S]$ is bipartite. \medskip

    Therefore to each minimum maximal matching $M_S=\mu\cup \mu'$ corresponds $S$ a maximal stable set of $G$ such that $\mu$ is a maximal matching of $G[V\setminus S]$, and $\mu'$ is a maximum matching of $B_S(\mu)$ covering all vertices of $T(\mu)$. \medskip

    To build $M_S$ we do as follows. Hint $S$ a maximal stable set. Compute $\mu$ a maximum matching in $G[V\setminus S]$. Thus $T(\mu)$ -- the set of vertices of $V\setminus S$ not covered by $\mu$ -- is a stable set and $B_S(\mu)$ is a bipartite graph. Let $q(S)=\vert S\vert-\vert T(\mu)\vert$ and $\Theta(S)={\vert V\vert - q(S) \over 2}$. Note that from the structure of  a minimum maximal matching $M_S$ we have $\vert M_S\vert\le \Theta(S)$.

    If $B_S(\mu)$ has a maximum matching $\mu'$ covering all vertices $T(\mu)$, then $M=\mu\cup \mu'$ is a maximal matching of size $\Theta(S)=\vert \mu\vert+\vert \mu'\vert$. Henceforth, we say that such a $(S,\mu)$ is {\it fair}. \medskip

    We show that if $(S,\mu)$ is not fair, then there exists $({S_1},{\mu_1})$ with $\vert{S_1}\vert>\vert S\vert$ such that $\Theta({S_1}) < \Theta(S)$.
    Assume that $(S,\mu)$ is not fair. Then $B_S(\mu)$ has no matching that covers all vertices of $T(\mu)$. So from Thereom \ref{Hall}, there is $T'\subseteq T(\mu)$ such that $\vert N(T')\vert<\vert T'\vert$, where $N(T')=\{u\in N(v)\cap S \mid v\in T'\}$. Take $T'$ to be maximal.
    Let $P$ be a maximal stable set of the set of vertices $w$ covered by $\mu$ such that $N(w)\cap S\subseteq N(T')$ and $N(w)\cap T'=\emptyset$.
    Let $S_1=(S\setminus N(T'))\cup T'\cup P$. Then $S_1$ is a maximal stable set with
    \begin{align}
        \vert S_1\vert & = \vert S\vert+\vert T'\vert -\vert N(T')\vert+\vert P\vert \nonumber \\
                       & > \vert S\vert \nonumber
    \end{align}
    Let $\mu_1$ be a maximum matching in $G[V\setminus S_1]$. Note that $\vert T(\mu)\vert =\vert V\vert-\vert S\vert-2\vert \mu\vert $. Since $\vert \mu_1\vert \ge \vert\mu\vert-\vert P\vert$, it follows that
    \begin{align}
        \vert T(\mu_1)\vert & = \vert V\vert-\vert S_1\vert-2\vert \mu_1\vert \nonumber                  \\
                            & \leq \vert V\vert-\vert S_1\vert-2\vert \mu \vert +2\vert P\vert \nonumber
    \end{align}
    Therefore
    \begin{align}
        q(S_1)-q(S) & =\vert S_1\vert-\vert T(\mu_1)\vert -\vert S\vert+ \vert T(\mu)\vert \nonumber                                                                              \\
                    & \geq \vert S_1\vert-\vert V\vert+\vert S_1\vert+2\vert \mu\vert -2\vert P\vert-\vert S\vert+\vert V\vert-\vert S\vert-2\vert \mu\vert \nonumber \\
                    & \geq 2(\vert S_1\vert-\vert S\vert-\vert P\vert) \nonumber
    \end{align}
    Since $\vert S_1\vert-\vert S\vert-\vert P\vert=\vert T'\vert -\vert N(T')\vert>0$, it follows that $q(S_1)>q(S)$. Moreover, since $\Theta(S)={\vert V\vert - q(S) \over 2}$ and $\Theta(S_1)={\vert V\vert - q(S_1) \over 2}$, it follows that $\Theta(S_1)<\Theta(S)$. \medskip

    Our algorithm is as follows: for all the maximal stable sets $S$ we compute $\mu$ a maximum matching in $G[V\setminus S]$. If $(S,\mu)$ is fair, then we have a maximal matching of size $\Theta(S)$, else we do the same with the maximal stable set $S_1=(S\setminus N(T'))\cup T'\cup P$. Since $\vert S_1\vert>\vert S\vert$ the procedure stops with a maximal matching $S'$ of size  $\Theta(S')<\Theta(S)$. Taking a maximal matching of minimum size $\Theta(S)$ we have a minimum maximal matching $M_S$.

    Since $G$ has a polynomial number of maximal stable sets, and that we can enumerate them in polynomial-time, it follows that our algorithm is polynomial.\end{proof}


Since we can compute a minimum maximal matching for graphs with a polynomial number of maximal stable sets that can be enumerated in polynomial-time, it follows from Property \ref{diam2tok2}:

\begin{theorem}\label{mdsdiam2}
    \textsc{Dominating Set} is polynomial-time solvable for line graphs with diameter $2$.
\end{theorem}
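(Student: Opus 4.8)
The plan is to chain together the tools the paper has already assembled, so that the complexity of \textsc{Dominating Set} on line graphs with diameter $2$ is reduced to the matching problem whose tractability was just established in Lemma \ref{MMM2k2}. First I would recall the bridge result attributed to Yannakakis and Gavril: a minimum maximal matching of a graph $G$ corresponds to a minimum dominating set of its line graph $L(G)$, and this correspondence is cardinality-preserving and polynomial-time computable in both directions. Hence $\gamma(L(G))$ equals the size of a minimum maximal matching of $G$, and to solve \textsc{Dominating Set} on a line graph $H$ with diameter $2$ it suffices to (i) recover a graph $G$ with $L(G)=H$, and (ii) compute a minimum maximal matching of $G$.

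For step (i), I would invoke Proposition \ref{polylinegraph} (Lehot's theorem): given the input graph $H$, we can in time $|E(H)|+O(|V(H)|)$ both test that $H$ is a line graph and, when it is, produce a root graph $G$ with $L(G)\cong H$. For step (ii), the key observation is that the diameter hypothesis lets us pin down the structure of $G$. By Proposition \ref{diam2tok2}, if $H=L(G)$ has diameter $2$ and $G$ is neither a triangle nor a star, then $G$ must be $2K_2$-free. The two excluded cases are trivial: if $G$ is a triangle or a star then $H$ is a clique, and \textsc{Dominating Set} is immediate there. So in the nontrivial case $G$ is a $2K_2$-free graph.

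It now remains to chain the results on $2K_2$-free graphs. By Corollary \ref{poly_stable_2K2} (from Farber's Proposition \ref{poly_cliques_C4_free}) a $2K_2$-free graph has at most $\binom{|V|}{2}$ stable sets, and by Proposition \ref{poly_maxstabl} its maximal stable sets can be enumerated in polynomial time. Thus every $2K_2$-free graph lies in the class $\mathcal{C}$ of Lemma \ref{MMM2k2}, and therefore a minimum maximal matching of $G$ can be computed in polynomial time. Combining this with the Yannakakis--Gavril correspondence of step (i) yields $\gamma(L(G))$, and hence decides whether $\gamma(H)\le k$, in polynomial time.

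The argument is essentially a reduction assembly rather than a fresh combinatorial proof, so the real work was already discharged in Lemma \ref{MMM2k2}; the only point that requires care here is the case analysis dictated by Proposition \ref{diam2tok2}, namely verifying that the triangle and star exceptions are exactly the cases where $L(G)$ is a complete graph and \textsc{Dominating Set} becomes trivial. I would therefore state explicitly that these degenerate root graphs yield clique line graphs and dispatch them separately before applying the $2K_2$-free machinery to the generic case.
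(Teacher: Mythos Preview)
Your proposal is correct and follows essentially the same route as the paper, which simply states that the theorem follows from Property~\ref{diam2tok2} together with Lemma~\ref{MMM2k2}. You have merely spelled out the implicit steps more carefully---recovering the root graph via Lehot's algorithm, handling the triangle/star exceptions as cliques, and noting that $2K_2$-free graphs lie in the class~$\mathcal{C}$ of Lemma~\ref{MMM2k2} via Corollary~\ref{poly_stable_2K2} and Proposition~\ref{poly_maxstabl}.
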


\subsection{Claw-free graphs with diameter 2}
Since line graphs is a proper subclass of claw-free graphs, it remains the claw-free graphs that are not line graphs. We will use the following result of Martin et al. \cite{Martin} for claw-free graphs with diameter $2$.
\begin{theorem}[Martin et al. \cite{Martin}]\label{struct}
    Every claw-free graph $G$ with diameter $2$ with distinct neighbourhoods, no $W$-joins, $\alpha(G)>3$, and $\vert V (G)\vert> 13$ is a proper circular-arc graph or a line graph.
\end{theorem}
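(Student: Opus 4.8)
The statement is quoted verbatim from Martin et al. \cite{Martin}, so the plan is to reconstruct the argument they use, which rests on the structure theorem for claw-free graphs of Chudnovsky and Seymour. That theorem asserts that every connected claw-free graph either lies in one of a short list of explicitly described \emph{basic} classes --- line graphs, (fuzzy) circular interval graphs, and a handful of dense exceptional families such as antiprismatic graphs and graphs derived from the icosahedron --- or else it decomposes through one of a few combinatorial operations: the presence of \emph{twins} (two vertices with the same neighbourhood), a \emph{W-join}, a nontrivial homogeneous pair, or a nontrivial strip-structure. My plan is to peel these possibilities off one at a time using the four hypotheses of the statement.

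First I would observe that the hypotheses \emph{distinct neighbourhoods} and \emph{no $W$-joins} are tailored precisely to kill the reductive operations: distinct neighbourhoods forbids twins outright, and the no-$W$-join hypothesis forbids gluing cliques along a homogeneous interface. In particular, since the fuzziness of a fuzzy circular interval graph is exactly where $W$-joins live, the no-$W$-join hypothesis collapses any fuzzy circular interval representation to an ordinary circular interval (proper circular-arc) representation. Thus $G$ can be neither simplified by a twin nor by a $W$-join, leaving only the possibility that $G$ is basic or that it admits a nontrivial strip-structure.

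Second, I would use $\alpha(G)>3$ to discard every dense exceptional basic class in one stroke: antiprismatic graphs have $\alpha(G)\le 3$ by definition (four pairwise non-adjacent vertices would induce no edges), and the icosahedron and its near-relatives likewise have independence number at most $3$. In tandem, the bound $\vert V(G)\vert >13$ removes the finitely many sporadic exceptions that the structure theorem lists separately (for instance the $12$-vertex icosahedron). After this pruning the only surviving basic graphs are line graphs and proper circular-arc graphs, and for the latter I would check that the diameter-$2$ hypothesis is compatible with, and indeed forces, a representation by arcs none of which contains another.

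Finally, the hard part will be the strip-structure case: one must show that a claw-free graph of diameter $2$ which is genuinely built by gluing two or more nontrivial strips along their ends cannot simultaneously satisfy $\alpha(G)>3$, have no $W$-joins, and have distinct neighbourhoods, unless the whole strip-structure degenerates to a single strip (which returns a line graph or a proper circular-arc graph). Here the diameter bound does the heavy lifting, since gluing several nontrivial strips tends to create vertices at distance $3$; ruling this out while tracking how the no-$W$-join and distinct-neighbourhood conditions constrain the interfaces is the delicate verification. Once each strip and each interface has been shown to be compatible only with the line-graph or proper circular-arc outcome, the claimed dichotomy follows.
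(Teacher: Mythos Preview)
The paper does not contain a proof of this theorem at all: it is stated with the attribution ``Martin et al.~\cite{Martin}'' and used as a black box in the proof of Theorem~\ref{clawfree}. So there is no ``paper's own proof'' to compare your proposal against.

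That said, your plan is a faithful reconstruction of how the cited result is actually obtained in \cite{Martin}: it is indeed a corollary of the Chudnovsky--Seymour structure theorem for claw-free graphs, and the four hypotheses are used exactly as you describe --- distinct neighbourhoods and no $W$-joins to eliminate the reduction operations (and to de-fuzz the circular interval case), $\alpha(G)>3$ to discard antiprismatic and near-antiprismatic graphs, and $\vert V(G)\vert>13$ to dispose of the sporadic finite families. Where your proposal remains a plan rather than a proof is precisely the place you flag yourself: the strip-structure case. Saying that ``gluing several nontrivial strips tends to create vertices at distance~$3$'' is the right intuition but not an argument; in \cite{Martin} this step requires a careful case analysis of how strips can attach under the diameter-$2$ constraint, and it is not short. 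If you intend to present this as a self-contained proof you would need to carry that analysis out; if you only need the conclusion, citing \cite{Martin} as the present paper does is the appropriate move.
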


A graph $G$ has \textit{distinct neighbourhoods} if there is no pair of adjacent vertices $u,v$ such that $(N(u)\setminus \{v\})\subseteq (N(v)\setminus \{u\})$, i.e. $N[u]\subseteq N[v]$.

A $W$-\textit{join} is a pair $(A,B)$ of disjoint non-empty sets of vertices such that $\vert A\vert+\vert B\vert>2$, $A$ and $B$ are cliques, $A$ is neither complete nor anticomplete to $B$, and every vertex of $V (G) \setminus (A \cup B)$ is either complete or anticomplete to $A$ and either complete or anticomplete to $B$. Note that if $G$ is connected and contains a $W$-join, then for any pair $u\in A$, $v\in B$, $\{u,v\}$ is a $\gamma$-set of $G$ and so $\gamma(G)\le 2$.

A \textit{circular-arc} graph is the intersection graph of a set of arcs on the circle. A circular-arc graph is \textit{proper} if there is a corresponding arc model such that no arc contains another.

To prove that we can solve \textsc{Dominating Set} in polynomial-time for claw-free graphs with diameter $2$, we use the following results of Hsu and Tsai \cite{Hsu} on circular-arc graphs.

\begin{theorem}[Hsu and Tsai \cite{Hsu}]\label{dompolycircular}
    \textsc{Dominating Set} can be solve in $O(n)$ time for circular-arc graphs.
\end{theorem}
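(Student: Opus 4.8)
The statement I am asked to establish is a cited external result, so the natural ``proof'' is a description of a linear-time algorithm for \textsc{Dominating Set} on circular-arc graphs. I would assume, as is standard for this bound, that the input is given (or preprocessed in linear time into) an arc model $A_1,\dots,A_n$ on a circle with the $2n$ endpoints pairwise distinct and listed in sorted cyclic order, where two arcs are adjacent in $G$ exactly when they overlap. The whole plan is to reduce the circular problem to the interval (linear) case, for which minimum domination is easy, and then to control the overhead introduced by the wrap-around so that the total cost stays linear.

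First I would recall how to dominate an interval graph in linear time. If the arcs were genuine intervals on a line, a greedy sweep works: repeatedly take the not-yet-dominated interval $u$ whose right endpoint is smallest, and put into $D$ the interval overlapping $u$ that reaches furthest to the right; this dominates $u$ together with everything forced by it, and an exchange argument shows the choice is optimal. With the endpoints already sorted this runs in $O(n)$ time, and the same routine is the engine I would reuse in the circular setting.

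The difficulty, and the only genuine obstacle, is the circular wrap-around: there is no canonical ``leftmost'' arc, and arcs crossing a chosen cut point couple the two ends of the sweep. To deal with this I would fix a point $p_0$ on the circle and reason about how an optimal dominating set $D^{*}$ behaves there. The arcs through $p_0$ form a clique that must be dominated, and one argues that it suffices to guess a bounded amount of information about the member(s) of $D^{*}$ responsible for dominating this clique (for instance the arc of $D^{*}$ containing $p_0$, or the first arc of $D^{*}$ met clockwise from $p_0$). Each guess places one arc in $D$, marks its neighbours as dominated, lets me cut the circle at a point inside that arc, and leaves an interval-domination instance to be finished by the greedy sweep above.

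The naive form of this enumeration tries $O(n)$ cut arcs and runs the sweep for each, giving only $O(n^2)$. Pushing the bound down to the claimed $O(n)$ is where the real work lies: one must show that only $O(1)$ essentially distinct cut configurations can occur for the optimum, or equivalently amortise the greedy computations across all admissible cut positions by reusing the sweep's state, so that the circular instance is settled with a constant number of linear passes. I expect this amortisation (or bounded-configuration) argument to be the main obstacle; by contrast, the interval subroutine and the correctness of its greedy choice are routine via standard exchange arguments.
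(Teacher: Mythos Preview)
The paper gives no proof of this statement at all: it is quoted verbatim as a theorem of Hsu and Tsai \cite{Hsu} and used as a black box inside the proof of Theorem~\ref{clawfree}. There is therefore nothing in the paper to compare your proposal against; the ``paper's own proof'' is simply the citation.

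As for your sketch itself, you correctly identify that this is an external result and give a plausible outline of the standard reduction from the circular to the interval case. However, your write-up is not a proof of the claimed $O(n)$ bound: you explicitly leave the crucial step --- cutting the number of ``guesses'' from $\Theta(n)$ to $O(1)$, or amortising the sweeps --- as an acknowledged obstacle (``I expect this amortisation \ldots\ to be the main obstacle''). What you have actually argued yields only an $O(n^2)$ algorithm. If the intent were to reproduce the Hsu--Tsai result rather than merely cite it, this is a genuine gap: the linear bound is the entire content of the theorem, and the bounded-configuration argument you allude to is not supplied. For the purposes of the present paper, though, matching what the authors do simply means citing \cite{Hsu} and moving on.
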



We are ready to prove the main result of this section.
\begin{theorem}\label{clawfree}
    \textsc{Dominating Set} is polynomial-time solvable for claw-free graphs with diameter $2$.
\end{theorem}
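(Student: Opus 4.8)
The plan is to combine the structural dichotomy of Martin et al. (Theorem \ref{struct}) with the algorithmic results already collected: polynomial-time domination on line graphs (Theorem \ref{mdsdiam2}) and on circular-arc graphs (Theorem \ref{dompolycircular}). The difficulty is that Theorem \ref{struct} only classifies claw-free diameter-2 graphs as line graphs or proper circular-arc graphs under four extra hypotheses — distinct neighbourhoods, no $W$-joins, $\alpha(G)>3$, and $|V(G)|>13$. So the bulk of the proof is showing that each of these side conditions can be removed or handled directly, either because it forces $\gamma(G)$ to be a small constant (so we can brute-force) or because it can be preprocessed away without changing the domination number.

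I would first dispose of the "easy" side conditions. If $|V(G)|\le 13$, we simply check all subsets of size at most $k$, which is constant work. If $\alpha(G)\le 3$, then a maximal stable set has at most $3$ vertices and dominates $G$, so $\gamma(G)\le 3$; we test all vertex subsets of size at most $3$ in $O(n^3)$ time (and compare against $k$). If $G$ contains a $W$-join, the remark following Theorem \ref{struct} gives $\gamma(G)\le 2$, which again we verify by checking all pairs of vertices. Thus in each of these three regimes we solve \textsc{Dominating Set} directly in polynomial time, and we may assume $G$ has $\alpha(G)>3$, more than $13$ vertices, and no $W$-join.

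The remaining obstacle is the "distinct neighbourhoods" hypothesis, which cannot simply be assumed away by a size bound. The key observation is that if $u,v$ are adjacent with $N[u]\subseteq N[v]$, then $v$ dominates everything $u$ dominates, so we may delete $u$ without changing the domination number: formally, $\gamma(G-u)=\gamma(G)$, since any dominating set of $G$ using $u$ can replace $u$ by $v$, and any dominating set of $G-u$ still dominates $u$ (as $u\in N[v]\subseteq$ the closed neighbourhood of a dominator). I would argue that repeatedly contracting such dominated vertices is safe: deleting $u$ keeps the graph claw-free (induced subgraphs of claw-free graphs are claw-free), and I must check it does not inflate the diameter above $2$ — here I would use that $v$ inherits all of $u$'s neighbours, so any shortest path through $u$ is preserved through $v$, keeping $\text{diam}\le 2$. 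After exhaustively removing dominated vertices (at most $n$ removals, each found in $O(n^2)$ time), the resulting graph $G^\ast$ has distinct neighbourhoods and the same domination number as $G$.

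Finally, on $G^\ast$ all four hypotheses of Theorem \ref{struct} hold, so $G^\ast$ is either a line graph or a proper circular-arc graph, and these two classes are recognizable in polynomial time (Proposition \ref{polylinegraph} for line graphs; proper circular-arc recognition is likewise polynomial). If $G^\ast$ is a line graph, Theorem \ref{mdsdiam2} computes $\gamma(G^\ast)$ in polynomial time; if it is a proper circular-arc graph, Theorem \ref{dompolycircular} does so in $O(n)$ time. Either way we obtain $\gamma(G^\ast)=\gamma(G)$ and compare with $k$. The part I expect to require the most care is verifying that the dominated-vertex reduction genuinely preserves both the domination number and the diameter-2 property simultaneously, and confirming that after reduction the preprocessed graph still satisfies $\alpha(G^\ast)>3$ and $|V(G^\ast)|>13$ (or else we fall back into one of the brute-force cases), so that Theorem \ref{struct} applies cleanly.
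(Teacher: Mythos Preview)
Your proposal is correct and follows essentially the same route as the paper: reduce to Theorem~\ref{struct} by disposing of the small/degenerate cases and stripping dominated vertices, then invoke Theorems~\ref{mdsdiam2} and~\ref{dompolycircular}. The only cosmetic difference is that the paper bundles your three ``easy'' side conditions into a single step---brute-force test whether $\gamma(G)\le 3$, and if not conclude immediately that $\alpha(G)>3$ and that $G$ has no $W$-join---so it never needs to compute $\alpha(G)$ or detect a $W$-join explicitly.
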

\begin{proof}
    Let $G=(V,E)$ be a claw-free graph with diameter $2$.
    We can assume that $\gamma(G)\geq 4$,  otherwise using brute force  we can compute a minimum dominating set of $G$ in time $O(n^5)$. Therefore $G$ has no $W$-join. Since $\gamma(G)\leq \alpha(G)$, it follows that $\alpha(G) > 3$. Moreover we can assume that $\vert V\vert > 13$ otherwise we can compute a $\gamma$-set in $O(1)$.

    Assume that there exists a pair of adjacent vertices $u,v$ such that $(N(u)\setminus \{v\})\subseteq (N(v)\setminus \{u\})$. For every $\gamma$-set $D$ of $G-u$, we have $N(u)\cap D \neq \emptyset$ in $G$. So a $\gamma$-set of $G-u$ is a $\gamma$-set of $G$. Moreover $diam(G-u)$=$diam(G)=2$. Therefore we can search for all such pairs $u,v$ and remove $u$ from $G$. This can be done in $O(n^3)$ time.

    Thus we can assume that $G$ has distinct neighbourhoods, no $W$-joins, $\alpha(G)>3$, and $\vert V (G)\vert> 13$. Then from Theorem \ref{struct}, $G$ is a proper circular-arc graph or a line graph. From Property \ref{polylinegraph}, we can check in polynomial time if $G$ is a line graph. If $G$ is a line graph, then from Theorem \ref{mdsdiam2} we can compute a $\gamma$-set of $G$ in polynomial time. Otherwise $G$ is a a proper circular-arc graph and from Theorem \ref{dompolycircular} we can compute a $\gamma$-set of $G$ in linear time.
\end{proof}

\subsection{$K_{1,4}$-free graphs with diameter 2}\label{K14}
In the section above we proved that \textsc{Dominating Set} is polynomial-time solvable for $K_{1,3}$-free graphs with diameter $2$. Here we show that the situation is different when we are concerned with $K_{1,4}$-free graphs.
\begin{theorem}\label{K14free}
\textsc{Dominating Set} is $\mathsf{NP}$-complete for $K_{1,4}$-free graphs with diameter $2$.
\end{theorem}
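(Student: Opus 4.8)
The statement asks for $\mathsf{NP}$-completeness, so membership in $\mathsf{NP}$ is immediate (a dominating set of size at most $k$ is a polynomial-time checkable certificate), and all the work lies in the hardness reduction. The plan is to give a polynomial transformation from a suitable $\mathsf{NP}$-complete problem that outputs a graph which is simultaneously $K_{1,4}$-free and of diameter exactly $2$. The choice of source problem is guided by the contrast with Theorem~\ref{clawfree}: forbidding the claw ($K_{1,3}$) forces every neighbourhood to have independence number at most $2$, which turned out to be tractable, whereas forbidding only $K_{1,4}$ permits neighbourhoods of independence number $3$. I would therefore reduce from a problem whose natural encoding yields neighbourhoods that are unions of at most three cliques; a clean candidate is \textsc{$3$-Satisfiability}, since the three literals of a clause are precisely what allows a clause-vertex to have up to three pairwise non-adjacent neighbours without ever creating an induced $K_{1,4}$.

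Concretely, I would introduce literal-vertices and clause-vertices, wiring each clause-vertex to the (at most three) literal-vertices it constrains, and I would partition the vertex set so that the neighbourhood of every vertex is a union of at most three cliques. Since a union of $t$ cliques has independence number at most $t$, this guarantees $\alpha(N(v))\le 3$ for every $v$ and hence the absence of an induced $K_{1,4}$. To bring the diameter down to $2$, I would add a connecting structure (a core clique together with a few shared \emph{anchor} vertices) supplying, for each non-adjacent pair, an explicit common neighbour. The budget $k$ would then be tuned so that it can be met exactly when the formula is satisfiable: a satisfying assignment yields a dominating set by selecting the true literals plus a bounded number of structural vertices, and conversely any dominating set of the prescribed size should be reshapeable into a consistent choice of one literal per variable that satisfies every clause.

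The verification then splits into three routine parts --- (a) checking $diam(G)=2$ pair by pair, (b) checking $K_{1,4}$-freeness by exhibiting the three-clique cover of each neighbourhood, and (c) proving both directions of the budget equivalence --- and one genuinely delicate part. The hard part will be \emph{soundness} of the reduction in the presence of the diameter-$2$ machinery: the very cliques and anchors that create short paths also make large portions of the graph cheaply dominable, so I must rule out \emph{shortcut} dominating sets that respect the budget without corresponding to any satisfying assignment. The crux is thus to engineer the connecting structure so that its vertices cannot, within the budget, dominate the forcing gadgets on their own --- every unit of budget spent on connectivity must be \emph{paid for} by a genuine literal choice. Balancing this against the two structural constraints (no vertex may acquire four pairwise non-adjacent neighbours, yet every pair must retain a common neighbour) is where the real difficulty resides, and I expect the construction to require private, mutually adjacent enforcement vertices whose cliques are small enough to respect $K_{1,4}$-freeness but whose domination still certifies the underlying assignment.
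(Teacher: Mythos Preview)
Your proposal identifies the right structural invariant --- $K_{1,4}$-freeness follows once every open neighbourhood is covered by at most three cliques --- and correctly isolates soundness in the presence of the diameter-$2$ scaffolding as the crux. But what you have written is a plan rather than a proof: you give no construction, and you explicitly leave the hard part open (``I expect the construction to require\ldots''). Without a concrete gadget and a verified budget argument there is nothing to check; variable/clause encodings of $3$-SAT tend to produce many pairwise non-adjacent clause-vertices, and supplying common neighbours for all such pairs without violating the three-clique bound on some neighbourhood \emph{and} without handing the dominating set a shortcut is precisely where sketches of this shape usually break.

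The paper closes this gap by reducing from \textsc{Vertex Cover} rather than $3$-SAT, and its device against shortcuts is duplication. Each edge $uv$ of the source graph becomes two vertices $e_{uv}^1,e_{uv}^2$; for every $u$, the set $\{u_1\}\cup\{e_{uv}^1:v\in N(u)\}$ is a clique, and likewise for layer~$2$; all vertex-copies $V_1$ lie in one large clique together with the connector set $S$ and an anchor $s$. A connector $s_{e,e'}\in S$ is added only for pairs $e,e'\in E_1\cup E_2$ with no common neighbour in $V_1$, and has exactly those two neighbours outside the big clique. Since the layers $E_1$ and $E_2$ are isomorphic over $V_1$, any dominating set can first be symmetrised between the layers and then purged of $S$-vertices at no extra cost, which forces it into $V_1$ where it reads off a vertex cover of the same size. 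That symmetrisation argument is the concrete replacement for your unspecified ``private, mutually adjacent enforcement vertices'', and it is what lets the budget be exactly $k$ with no additive overhead. If you insist on $3$-SAT you will need an analogous mechanism; reducing from \textsc{Vertex Cover} is cleaner here because the three-clique cover of each neighbourhood (two incidence cliques plus the structural clique) arises for free.
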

\begin{proof}
We give a polynomial transformation from \textsc{Vertex Cover} (see \cite{GJ}).
From $I=(G=(V,E),k)$ an instance of \textsc{Vertex Cover}, we build an instance $I'=(G',\gamma)$ where $G'$ is $K_{1,4}$-free with diameter $2$ and $\gamma=k$.

We start by constructing $G'=(V',E')$. The vertices of $V'$ are partitioned into $V_1,E_1,E_2,S,\{s\}$. We defined these sets and the edges of $G'$ as follows:

\begin{itemize}
    \item For each vertex $v\in V$, there is a vertex $v_1\in V_1$, that is, $V_1=\{v_1 \mid v\in V\}$;
    \item For each edge $uv\in E$, there is a vertex $e_{uv}^1\in E_1$ and $e_{uv}^2\in E_2$, that is, $E_1=\{e_{uv}^1 \mid uv\in E\}$ and $E_2=\{e_{uv}^2 \mid uv\in E\}$;
    \item For each $u_1\in V_1$, the vertices $\{u_1\}\cup \{e_{uv}^1 \mid u=u_1\}$ and $\{u_1\}\cup \{e_{uv}^2 \mid u=u_1\}$ form two cliques;
    \item $V_1\cup S\cup\{s\}$ is a clique;
    \item For each pair $e,e'\in E_1\cup E_2$ such that $N(e)\cap N(e')\cap V_1=\emptyset$, there are a vertex $s_{e,e'}\in S$ and the two edges $s_{e,e'}e,s_{e,e'}e'\in E'$. Note that these edges $e,e'$ correspond to copies of non-incident edges in $G$.
\end{itemize}

Since $V_1\cup S\cup\{s\}$ is a clique and that every pair of vertices $e,e'\in E_1\cup E_2$ has a common neighbor in $V_1\cup S$, it follows that $diam(G')=2$.
We show that $G'$ is $K_{1,4}$-free. For each vertex of $G'$, we give a partition of its neighborhood into at most three cliques. For $u_1\in V_1$: $N(u_1)\cap E_1$, $N(u_1)\cap E_2$, and $N(u_1)\cap (S\cup V_1\cup \{s\})$. For $e_{uv}^i\in E_i$, $i\in\{1,2\}$: $N(e_{uv}^i)\cap N(u_1)$, $N(e_{uv}^i)\cap N(v_1)$, and $N(e_{uv})\cap S$. For $s_{e,e'}\in S$: $\{s_{e,e'},e\}$, $\{s_{e,e'},e'\}$, and $V_1\cup S\cup \{s\}$. For the vertex $s$: $N(s)=S\cup V_1$. Therefore $G'$ is $K_{1,4}$-free. \medskip

Let  $C$, $\vert C\vert\le k=\gamma,$ be a vertex cover of $G$. Then its copy in $V_1$ is a dominating set of $G'$ of size at most $\gamma$.

Let $I'=(G',\gamma)$ be a positive instance, so there exists $\Gamma,\vert\Gamma\vert\le \gamma$ a dominating set of $G'$. From $\Gamma$ we will construct a dominating set $\Gamma'$ such that $\Gamma'\subseteq V_1$.
 Since $N(s)=V_1\cup S$ we can assume that  $s\not\in\Gamma$.
Let $S_i$ be the vertices of $S$ with two neighbors in $E_i$, that is, $S_i=\{s_{e,e'} \mid e,e'\in E_i\}$, $i=1,2$.
Let $\Gamma_i=\Gamma\cap (E_i,S_i)$. Without loss of generality $\vert \Gamma_1\vert \leq \vert \Gamma_2\vert$.
Let $\Gamma'=\Gamma\setminus \Gamma_2$. For each $e^1\in \Gamma^1$, we add $e^2$ to $\Gamma'$, and for each for each $s_{e^1,e'^1}\in \Gamma_1$, we add $s_{e^2,e'^2}$ to $\Gamma'$.
Since $G'[E_1\cup S_1\cup V_1]$ is isomorphic to $G'[E_2\cup S_2\cup V_1]$, it follows that $\Gamma'$ is a dominating set of $G'$ such that $\vert \Gamma'\vert \leq \gamma$.

Let $E_i'$ be the vertices $e_{uv}^i\in E_i$ such that $\Gamma'\cap N(e_{uv}^i)\cap V_1=\emptyset$, $i=1,2$. For each $e_{uv}^1\in E_1'$, there is $e_{uv}^2\in E_2'$, and vice versa, because $N(e_{uv}^1)\cap V_1 = N(e_{uv}^2)\cap V_1$. Since $N(e_{uv}^1)\cap N(e_{uv}^2)\cap S=\{s_{e,e'}\}$, with $e=e_{uv}^1,e'=e_{uv}^2$, and that each vertex of $S$ has exactly two neighbors in $E_1\cup E_2$, it follows that $\vert E_1'\vert \leq \vert S\cap \Gamma'\vert$. Then we remove the vertices of $S$ from $\Gamma'$ and we replace them by $u_1\in V_1$ for each $e_{uv}^1\in E_1'$. It follows that $\Gamma'$ is a dominating set of $G'$ such that $\vert \Gamma'\vert \leq \gamma$. Note that $\Gamma'\subseteq V_1$.

Let $C$ be the copies of the vertices of $\Gamma'\cap V_1$ in $G$. Since each vertex $e_{uv}^1\in E_1$ has a neighbor in $\Gamma'\cap V_1$, it follows that $C$ is a vertex cover of $G$ such that $\vert C\vert \leq k$.
\end{proof}

\section{Graphs with forbidden cycle(s)}\label{Cfree}

The section above concerns the graphs of diameter two that are star-free. Stars are special case of acyclic graphs. In this section we deal with graphs that are $H$-free when $H$ is a cycle or a collection of cycle of fixed size. The next sections concern first $C_3$-free graphs and then $(C_3,C_4)$-free graphs.

\subsection{Triangle-free graphs with diameter 2}
We first give a Lemma that will be used in the proof of our main result about $C_3$-free graphs.

\begin{lemma}\label{simplicial}
    Let $G$ be a graph. If $u,v$ are two distinct vertices such that $N(u)\subseteq N(v)$ and $v$ is simplicial, then $\gamma(G)=\gamma(G-v)$.
\end{lemma}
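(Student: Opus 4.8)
The plan is to prove the two inequalities $\gamma(G-v)\le\gamma(G)$ and $\gamma(G)\le\gamma(G-v)$ separately, after recording a few structural facts. First I would note that $u$ and $v$ are non-adjacent: if $uv\in E$ then $v\in N(u)\subseteq N(v)$, contradicting $v\notin N(v)$. Since $v$ is simplicial, $N[v]$ is a clique; and because $N(u)\subseteq N(v)$ the set $N(u)$ is a clique as well, so $N[u]$ is a clique and $u$ is simplicial too. I would also use that $u$ is not isolated, i.e. $N(u)\neq\emptyset$ (which forces $N(v)\neq\emptyset$): this holds automatically in the diameter-$2$ setting where the lemma is applied, and it is genuinely needed, since an isolated $u$ together with a pendant vertex $v$ whose neighbour continues along a path shows the equality can fail.

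For $\gamma(G-v)\le\gamma(G)$ I would take a $\gamma$-set $D$ of $G$. If $v\in D$, pick any $w\in N(v)$ and replace $v$ by $w$. Because $N[v]$ is a clique containing $w$, the vertex $w$ dominates all of $N[v]$, i.e. everything $v$ dominated; hence $D''=(D\setminus\{v\})\cup\{w\}$ still dominates $G$ with $\vert D''\vert\le\vert D\vert$. Thus we may assume $v\notin D$, and then $D\subseteq V\setminus\{v\}$ dominates every vertex of $G-v$, giving $\gamma(G-v)\le\vert D\vert=\gamma(G)$.

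For the reverse inequality I would take a $\gamma$-set $D'$ of $G-v$ and try to turn it into a dominating set of $G$ of the same size, the only missing requirement being the domination of $v$. If $D'\cap N(v)\neq\emptyset$ then $v$ is already dominated and $D'$ itself dominates $G$. The hard case, and the \emph{main obstacle}, is $D'\cap N(v)=\emptyset$. Here I would exploit $u$: since $D'$ dominates $u$ and every neighbour of $u$ lies in $N(u)\subseteq N(v)$, a neighbour of $u$ in $D'$ would contradict $D'\cap N(v)=\emptyset$; hence $u\in D'$ and $N(u)\cap D'=\emptyset$. Now pick $w\in N(u)$ and set $D''=(D'\setminus\{u\})\cup\{w\}$, which has the same size since $w\notin D'$. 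Because $N[u]$ is a clique containing $w$, the vertex $w$ dominates everything $u$ dominated, and because $w\in N(u)\subseteq N(v)$ it also dominates $v$; so $D''$ dominates $G$ and $\gamma(G)\le\vert D'\vert=\gamma(G-v)$.

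Combining the two inequalities yields $\gamma(G)=\gamma(G-v)$. The delicate point throughout is that each swap relies on $N(u)$ (hence $N(v)$) being nonempty and on the clique structure of $N[u]$ and $N[v]$ furnished by simpliciality; the genuinely non-trivial step is recognising, in the case $D'\cap N(v)=\emptyset$, that $u$ must then be an uncovered-neighbour isolated member of $D'$ so that a single simplicial swap simultaneously recovers the coverage of $N[u]$ and supplies a dominator for $v$.
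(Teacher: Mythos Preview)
Your proof is correct and follows essentially the same route as the paper's: both establish that $u$ and $v$ are non-adjacent and that $u$ is simplicial, then use simplicial swaps to obtain a $\gamma$-set of $G$ avoiding $v$ (giving $\gamma(G-v)\le\gamma(G)$) and a $\gamma$-set of $G-v$ avoiding $u$, which necessarily contains a vertex of $N(u)\subseteq N(v)$ and therefore dominates $G$ (giving $\gamma(G)\le\gamma(G-v)$). You are in fact more careful than the paper in flagging the hidden hypothesis $N(u)\neq\emptyset$, which both arguments need and which is guaranteed in the diameter-$2$ application.
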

\begin{proof}
    We remark that for any graph with a simplicial vertex $w$ there exists a  $\gamma$-set not containing $w$.
    So there exists $S$ a $\gamma$-set of $G$ such that $v\not\in S$. Note that $S$ is a dominating set of $G-v$. Since $N(u)\subseteq N(v)$, it follows that $u$ is simplicial and $v$ is not a neighbor of $u$. So there exists $S'$ a $\gamma$-set of $G-v$ such that $u\not\in S'$. Then $S'$ is a dominating set of $G$. It follows that $\vert S\vert=\vert S'\vert$, i.e., $\gamma(G)=\gamma(G-v)$.
\end{proof}

We show the main result  concerning triangle-free graphs.
\begin{theorem}\label{d2trifree}
    \textsc{Dominating Set} is $\mathsf{NP}$-complete for triangle-free graphs with diameter $2$.
\end{theorem}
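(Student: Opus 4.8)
The plan is to give a polynomial reduction from a known $\mathsf{NP}$-complete problem while keeping the constructed graph triangle-free and forcing its diameter to be exactly $2$. The natural source problem is \textsc{Dominating Set} itself (or \textsc{Vertex Cover}) on a restricted class, since the $K_{1,4}$-free construction in the previous section suggests the authors favor reductions that encode a cover/domination requirement. First I would take an instance $G=(V,E)$ of the source problem and build a bipartite-like skeleton: introduce a copy $V_1=\{v_1 \mid v\in V\}$ of the vertices and, for each edge, some edge-gadget vertices, exactly as in the $K_{1,4}$-free proof. The crucial difference is that triangle-freeness forbids the clique $V_1\cup S\cup\{s\}$ that was used there to pin down the diameter, so that trick is unavailable and must be replaced.

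The key obstacle, and the heart of the proof, is reconciling two competing demands: diameter $2$ normally wants many common neighbors (which is cheaply achieved with a universal or near-universal vertex), whereas triangle-freeness forbids two adjacent vertices from sharing any neighbor. So I cannot simply attach one apex vertex to everything. The standard device to resolve this tension is to add a single vertex $w$ adjacent to one side of a bipartition together with a twin vertex $w'$, or more robustly to arrange the whole construction as a triangle-free graph in which every pair of non-adjacent vertices already has a common neighbor built in by the gadget structure. Concretely, I would design edge-gadgets so that any two vertices that are not forced to be adjacent see a common ``connector'' vertex, and I would verify diameter $2$ by checking, case by case over the vertex types, that each non-adjacent pair has such a connector while never creating a triangle (i.e.\ no connector is adjacent to two mutually adjacent vertices).

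After fixing the construction I would set the target $\gamma$ in terms of $k$ and the instance size and prove the two implications. For the forward direction, from a dominating set (or vertex cover) $D$ of $G$ I would exhibit a dominating set of $G'$ of the prescribed size by taking the copies in $V_1$ of $D$ together with a fixed number of bookkeeping vertices needed to dominate the edge-gadgets. For the converse, I would take a dominating set $\Gamma$ of $G'$ of size at most $\gamma$ and perform a \emph{normalization} step: argue that any vertex of $\Gamma$ lying in an edge-gadget or in an auxiliary set can be exchanged for a vertex of $V_1$ without increasing the size and without losing domination, exactly analogous to the ``we add $e^2$, replace $s_{e,e'}$'' manipulations in the $K_{1,4}$-free proof. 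Once $\Gamma$ is pushed entirely into $V_1$, its preimage in $V$ is a solution to the source instance. I expect this normalization/exchange argument to be the main technical burden, since it must handle every vertex type of $G'$; the triangle-free diameter-$2$ verification is delicate but essentially a finite case check, and Lemma~\ref{simplicial} is likely invoked to clean up dominated twin vertices so that the reduction stays within the triangle-free diameter-$2$ class.
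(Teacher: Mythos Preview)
Your plan correctly isolates the central difficulty --- reconciling diameter~$2$ with triangle-freeness --- and anticipates the normalization argument for the converse direction. But the plan stops exactly where the real work begins: you say you would ``design edge-gadgets so that any two vertices \ldots\ see a common connector vertex'' without saying what those gadgets are. That is the whole problem. If you mimic the $K_{1,4}$-free construction with $V_1$ a stable set plus edge-vertices, then two copies $u_1,v_1$ with $uv\notin E$ have no common neighbor; adding ad hoc connectors $c_{u,v}$ only shifts the problem, since now each $c_{u,v}$ must be at distance~$2$ from every other connector and every edge-vertex, and you cannot make the connectors a clique. This cascades, and there is no indication in your outline of how to close it.

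The paper sidesteps this entirely by a choice you do not make: it reduces from \textsc{Dominating Set} on \emph{split graphs} of diameter~$2$, not from general graphs or \textsc{Vertex Cover}. Starting from a split graph $G=(K\cup S,E)$ lets one replace the clique $K$ by a stable set $K_1$ with a single apex $t$ complete to $K_1$ (triangle-free, and any two $K_1$-vertices are at distance~$2$ via $t$). For the $K$--$S$ adjacencies, the paper takes \emph{two} copies $S_1,S_2$ of $S$ and uses a complement trick: $u_1v_1\in E'$ iff $uv\in E$, and $u_1v_2\in E'$ iff $uv\notin E$, together with $v_1v_2$ and a second apex $s$ complete to $S_2$. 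This is what buys diameter~$2$ for every remaining pair without creating triangles, and Lemma~\ref{simplicial} is used beforehand to prune $S$ so that non-adjacent $S_1,S_2$ pairs have the needed common neighbor in $K_1$. The choice of source problem is the key idea; your outline, built on an arbitrary instance with edge-gadgets, does not contain it.
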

\begin{proof}
    We give a polynomial transformation from \textsc{Dominating set} which is $\mathsf{NP}$-complete for split graphs with diameter $2$ (see \cite{Splitdiam2}). From $I=(G,k)$ an instance of \textsc{Dominating set}, we build an instance $I'=(G',k')$. In $I=(G,k)$, $G=(K\cup S, E)$ is a split graph with diameter $2$ where $K$ is a clique and $S$ is a stable set. Let $u,v\in S$: first, since the vertices of $S$ are simplicial, it follows from Lemma \ref{simplicial} that we can suppose that $N(u)\not\subseteq N(v)$ and $N(v)\not\subseteq N(u)$; second, since $diam(G)=2$ there exists $w\in K$ such that $u-w-v$ is a path of $G$.
    From $G$ we build $G'=(V',E')$ as follows. We take a copie $K_1$ of $K$ and two copies $S_1,S_2$ of $S$. For the sake of simplicity, for $v\in K$, its copy in $K_1$ is denoted by $v_1$, whereas for $v\in S$, its copy in $S_1, S_2$, is denoted by $v_1, v_2$, respectively. Then we add two vertices $t$ and $s$. For each pair $u\in K$, $v\in S$: if $uv\in E$, then we add the edge $u_1v_1$, otherwise $uv\not\in E$ and we add the edge $u_1v_2$. For every $v\in S$, we add the edge $v_1v_2$. Then we make $t$ complete to $K_1$ and $s$ complete to $S_2$. Last we add the edge $st$. Note that $\{t\},K_1,\{s\},S_1,S_2$ is a partition of $G'$ into stable sets. Finally, we take $k'=k+1$. \medskip

    We show that $G'$ is triangle-free.
    Since $N(t)=K_1\cup \{s\}$ and $N(s)=S_2\cup \{t\}$ are two stable sets, it follows that $t$ and $s$ cannot be in a triangle. Thus if a triangle exists, it has one vertex $u_1\in K_1$, one vertex in $S_1$, and one vertex in $S_2$. So this triangle contains the edge $v_1v_2$. But when $u_1v_1$ is an edge, $u_1v_2$ is not an edge,  and vice versa. So $G'$ is triangle-free.\medskip

    We show that $diam(G')=2$. We observe that $t$ and $s$ are at distance at most two from any vertex of the graph. So we can focus on the vertices of $K_1\cup S_1\cup S_2$. Since $t$ is complete to $K_1$, respectively $s$ is complete to $S_2$, for any pair $v_1,u_1\in K_1$, respectively $v_2,u_2\in S_2$, there exists the path $v_1-t-u_1$, respectively $v_2-s-u_2$.  Since $diam(G)=2$, for any pair $v_1,u_1\in S_1$ there exists $w_1\in K_1$ such that $v_1-w_1-u_1$ is a path of $G'$. Now let $u_1\in S_1,v_1\in K_1$, respectively $u_2\in S_2,v_1\in K_1$, be such that $uv\not \in E$, respectively $uv \in E$. Then $u_2v_1\in E'$, respectively $u_1v_1\in E$, so $u_1-u_2-v_1$, respectively $u_2-u_1-v_1$, is a path of $G'$. Now let $u_1\in S_1,v_2\in S_2,u\ne v$. From Lemma \ref{simplicial}, we can assume that there exists  $w\in N(u),w\not\in N(v)$. Therefore $u_1w_1,v_2w_1\in E'$ and $u_1-w_1-v_2$ is a path of $G'$.
 So $diam(G')=2$. \medskip

    Let $D$ be a dominating set of $G$ with $\vert D\vert\le k$. Let $D'$ be the set of the copies of the vertices of $D$ in  $K_1\cup S_1$. Then $D'\cup \{t\}$ is a dominating set of $G'$ and $D'\cup \{k\}\leq k+1=k'$.

    Conversely, let $D'$ be a dominating set of $G'$ with $\vert D\vert\le k'=k+1$. Since $N(s)= S_2\cup \{t\}$, it follows that $\vert D'\cap(\{t,s\}\cup S_2)\vert\ge 1$. First, let $S_2\cap D'=\emptyset$. So $\vert D'\cap\{t,s\}\vert\ge 1$. For each $v_1\in D'\cap S_1$, if any, let a unique $u_1\in N(v_1)\cap K_1$. Then let ${\bar D}=(D'\setminus\{v_1: v_1\in D'\cap S_1\}\cup\{u_1: u_1\in N(v_1),u_1\ {\rm unique},\ v_1\in D'\cap S_1\})\setminus\{t,s\}$. Second, $\vert S_2\cap D'\vert\ge 1$, $t\in D'$. For each $v_2\in D'\cap S_2$, let a unique $u_1\in N(v_1)\cap K_1$ ($v_1$ is the neighbor of $v_2$ in $S_1$). Then  let ${\bar D}=(D'\setminus\{v_2 : v_2\in D'\cap S_2\}\cup\{u_1: u_1\in N(v_1),u_1\ {\rm unique},\ v_2\in D'\cap S_2\})\setminus\{t\}$.
    Third, $\vert S_2\cap D'\vert\ge 1$, $t\not\in D'$. For each $v_1\in S_1$ which is not dominated by a vertex of $K_1$ or by itself, we have that $v_1$ is dominated by $v_2$ its neighbor in $S_2$. Let any $w\in N(v_1)\cap K_1$. Since $t\not \in D'$ we have that $w$ is dominated either by $u_1\in N(w)\cap S_1,u_1\ne v_1$ or by $u_2\in S_2,u_2\ne v_2$.

    In the first case we have replaced $u_1$ by $w$ in $D'$, in the second case we have replaced $u_2$ by $w$ in $D'$. Then we take ${\bar D}=D'\cap(K_1\cup S_1)$. In all cases we take $D$ the copies of the vertices of $\bar D$. We have that $D$ is a dominating set of $G$ with $\vert D\vert\le k$.
\end{proof}

We derive the $\mathsf{NP}$-completeness of \textsc{Dominating Set} for triangle-free graphs for some classes of $H$-free graphs where $C_3\subseteq_i H$.
From Theorem \ref{linegraph}, the line graphs can be defined by the nine forbidden induced subgraphs of $\cal F$,  see Figure \ref{line_graphs_9forbidden_graphs}. All of them but the claw contain a triangle. From Theorem \ref{d2trifree}, it follows:

\begin{coro}\label{diam2linesclaw}
 \textsc{Dominating Set} is $\mathsf{NP}$-complete for $H$-free graphs with diameter $2$, $H\in{\cal F}\setminus \{claw\}$.
\end{coro}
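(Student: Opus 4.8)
The plan is to avoid any new reduction and instead obtain the result purely by a containment of graph classes, using Theorem \ref{d2trifree} as the source of hardness. The crucial observation, already flagged in the sentence preceding the statement, is that among the nine graphs of $\cal F$ only the claw $K_{1,3}$ is triangle-free: every other member $H\in{\cal F}\setminus\{claw\}$ satisfies $C_3\subseteq_i H$. Once this is in hand, the corollary should drop out immediately, since $\mathsf{NP}$-hardness transfers from any subclass to a superclass.

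First I would verify the structural fact by inspecting each of the eight graphs $H\in{\cal F}\setminus\{claw\}$ of Figure \ref{line_graphs_9forbidden_graphs} and exhibiting three pairwise adjacent vertices in each; this gives $C_3\subseteq_i H$ for every such $H$ (for a triangle there is no distinction between a subgraph and an induced subgraph). Next I would argue that $C_3\subseteq_i H$ forces every triangle-free graph to be $H$-free: if a triangle-free graph $G$ contained $H$ as an induced subgraph, then the induced $C_3$ inside $H$ would be an induced $C_3$ of $G$, contradicting triangle-freeness. Consequently, for each $H\in{\cal F}\setminus\{claw\}$, the class of triangle-free graphs with diameter $2$ is contained in the class of $H$-free graphs with diameter $2$, and this containment requires no modification of the graphs, so the diameter-$2$ restriction is preserved verbatim. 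Finally I would invoke that \textsc{Dominating Set} lies in $\mathsf{NP}$ (a candidate dominating set is a polynomial-size certificate checkable in polynomial time) together with Theorem \ref{d2trifree}, which already gives $\mathsf{NP}$-hardness on the smaller class; hardness on the larger class follows, and combined with membership this yields $\mathsf{NP}$-completeness.

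I do not expect a genuine obstacle here: the hardness is inherited for free, precisely because an $\mathsf{NP}$-hard problem on a subclass remains $\mathsf{NP}$-hard on any class containing it, and no reduction has to be rebuilt. The only point requiring any care is the bookkeeping that each of the eight forbidden graphs other than the claw truly contains an (induced) triangle; but since these graphs are small and explicitly drawn in Figure \ref{line_graphs_9forbidden_graphs}, this check is entirely routine and is really the whole content of the proof.
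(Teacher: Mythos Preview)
Your proposal is correct and follows exactly the same approach as the paper: observe that every $H\in{\cal F}\setminus\{claw\}$ contains an induced $C_3$, so the class of triangle-free graphs with diameter $2$ is a subclass of $H$-free graphs with diameter $2$, and invoke Theorem~\ref{d2trifree}. The paper's own justification is in fact nothing more than the sentence preceding the corollary, and your write-up simply spells out this containment argument in more detail.
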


\subsection{Graphs with fixed girth}
While, from  Theorem \ref{d2trifree}, \textsc{Dominating Set} is $\mathsf{NP}$-complete for $C_3$-free graphs with diameter $2$ , we show that the computational complexity status changes when $C_4$ is also a forbidden induced subgraph, that is, for the class of $(C_3,C_4)$-free graphs or, with other words, the graphs of girth  $5$ or more with diameter $2$. First we show that there is no need to study graphs of girth at least $6$ with diameter $2$ because there are none.

\begin{prop}\label{g>5}
    There is no graph $G$ with $diam(G)=2$ and girth $g(G)\ge 6$.
\end{prop}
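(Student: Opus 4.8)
The plan is to argue by contradiction. Suppose $G$ satisfies $diam(G)=2$ and $g(G)\ge 6$; in particular $G$ contains a cycle, so I would fix a shortest one, $C=v_1v_2\cdots v_gv_1$, of length $g\ge 6$. The whole proof exploits the tension between the diameter-$2$ hypothesis, which forces any two non-adjacent vertices to share a common neighbour, and the girth hypothesis, which forbids short cycles: the former tends to produce chords or short detours that the latter prohibits.

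Concretely, I would focus on the two vertices $v_1$ and $v_4$, which lie at distance $3$ along $C$. First I would check that $v_1v_4\notin E$: otherwise $v_1v_2v_3v_4v_1$ would be a $4$-cycle, contradicting $g\ge 6$. Hence $v_1$ and $v_4$ are distinct and non-adjacent, so their distance in $G$ is at least $2$; since $diam(G)=2$ it equals $2$, and therefore $v_1$ and $v_4$ admit a common neighbour $w$.

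The decisive step is to verify that $w\notin\{v_1,v_2,v_3,v_4\}$, so that $v_1v_2v_3v_4wv_1$ is a genuine cycle of length $5$. Clearly $w\neq v_1,v_4$, as $w$ is a neighbour of both. If $w=v_2$, then $v_2v_4\in E$ and $v_2v_3v_4$ is a triangle; if $w=v_3$, then $v_1v_3\in E$ and $v_1v_2v_3$ is a triangle; both contradict $g\ge 6$. Thus the five vertices $v_1,v_2,v_3,v_4,w$ are pairwise distinct, $G$ contains a cycle of length $5$, and so its girth is at most $5<6$, contradicting $g(G)\ge 6$.

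I expect the only genuine subtlety to be exactly this distinctness check for $w$: the argument hinges on the common neighbour supplied by the diameter-$2$ condition not collapsing onto the interior vertices $v_2,v_3$ of the arc, and it is precisely the absence of triangles (part of $g\ge 6$) that rules this out. One definitional caveat is worth flagging: since the girth is defined here as the minimum length of a cycle, the statement is to be read for graphs that actually contain a cycle; a forest of diameter $2$ is a star $K_{1,p}$, which has no cycle and hence lies outside the scope of the hypothesis $g(G)\ge 6$.
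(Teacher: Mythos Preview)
Your proof is correct and follows essentially the same approach as the paper: both take a shortest cycle, use $diam(G)=2$ to find a common neighbour $w$ of $v_1$ and $v_4$, and exhibit a $5$-cycle through $v_1,v_2,v_3,v_4,w$ to contradict $g(G)\ge 6$. Your explicit distinctness check for $w$ is in fact slightly more careful than the paper's argument, which asserts $w\notin C$ without justification.
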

\begin{proof}
    Assume there exists  $G=(V,E)$ with $g(G)\ge 6$ and $diam(G)=2$. Let $C=(v_1,v_2,\ldots,v_g,v_1)$, $g\ge6$, be a shortest hole in $G$. Since $diam(G)=2$  there exists $w\in N(v_1)\cap N(v_4)$, $w\not\in C$. But then $G[\{v_1,v_2,v_3,v_4,w\}]=C_5$, a contradiction since $g(G)\ge 6$.
\end{proof}

We give a property of graphs of girth $5$ with diameter $2$.

\begin{prop}\label{propgirth5}
   If $G=(V,E)$ is a graph with $g(G)=5,diam(G)=2$ then for any pair $u,v\in V,uv\not\in E$ we have $\vert N[u]\cap N[v]\vert = 1$.
\end{prop}
\begin{proof}
    Since $diam(G)=2$ we have  $N[u]\cap N[v]\ne\emptyset$.
    If $\vert N[u]\cap N[v]\vert \ge 2$, then $C_4 \subseteq_i G$.
\end{proof}

We are ready to show our result on graphs with diameter two and girth five.

\begin{prop}\label{girth5}
    Let $G=(V,E)$ be a graph of girth $5$ and diameter $2$. Then $\gamma(G)=\Delta(G)$ and we can find a minimum dominating set of $G$ in time $O(1)$.
\end{prop}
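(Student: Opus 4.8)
The plan is to first pin down the very rigid local structure that girth $5$ and diameter $2$ impose, then deduce that $G$ is regular, and finally read off both the value of $\gamma(G)$ and the constant-time algorithm. Throughout I write $n=|V|$ and, since girth $5$ means $G$ is $(C_3,C_4)$-free while still containing a $C_5$, I use that no two adjacent vertices have a common neighbour (that would be a $C_3$), while by Property~\ref{propgirth5} any two non-adjacent vertices have exactly one common neighbour. Fixing a vertex $v$, its $\deg(v)$ neighbours form an independent set and, because there is no $C_4$, the $\deg(u)-1$ neighbours of each $u\in N(v)$ other than $v$ are pairwise distinct and avoid $N[v]$; since $diam(G)=2$ these exhaust the remaining vertices. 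Counting gives the key identity $\sum_{u\in N(v)}\deg(u)=n-1$, valid for \emph{every} vertex $v$.

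The main obstacle is proving that $G$ is regular; this is genuinely necessary, because $N(v)$ dominates $G$ for every $v$, so $\gamma(G)\le\delta(G)$ (the minimum degree), and the claimed equality $\gamma(G)=\Delta(G)$ would fail outright if $\delta(G)<\Delta(G)$. To establish regularity I would double-count the quantity $\sum_{u\in N(v)}\sum_{w\in N(u)}\deg(w)$. Summing the identity above over $u\in N(v)$ gives $\deg(v)(n-1)$; regrouping the same sum by the vertices $w$, weighted by the number of common neighbours of $v$ and $w$ (namely $\deg(v)$ if $w=v$, $0$ if $w\sim v$, and $1$ otherwise), gives $\deg(v)^2+2|E|-\deg(v)-(n-1)$. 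Equating the two expressions shows that every vertex degree is a root of the single quadratic
\[ \deg(v)^2 - n\,\deg(v) + (2|E|-n+1)=0, \]
so at most two distinct degree values occur, say $r<s$ with $r+s=n$.

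It then remains to exclude the two-value case. If both values were attained, a vertex of degree $s$ would have $s$ neighbours, each of degree at least $r$, whose degrees sum to $n-1=r+s-1$; hence $sr\le r+s-1$, i.e.\ $(r-1)(s-1)\le 0$, which forces $r=1$ since $s\ge 2$. But a vertex $x$ of degree $1$ with unique neighbour $y$ must, by $diam(G)=2$, have every other vertex within distance $1$ of $y$, making $y$ universal; then no two vertices of $N(y)$ can be adjacent (that would create a $C_3$), so $G=K_{1,n-1}$ is a star, contradicting girth $5$. Therefore a single degree value survives, $G$ is $k$-regular, and the identity yields $n=k^2+1$.

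With regularity secured the conclusion is immediate. For any vertex $v$, $N(v)$ is a dominating set of size $k=\Delta(G)$, giving $\gamma(G)\le\Delta(G)$. Conversely each closed neighbourhood has exactly $k+1$ vertices, so a dominating set $D$ satisfies $|D|(k+1)\ge n=k^2+1$, whence $|D|>k-1$ and thus $|D|\ge k$; combining the bounds gives $\gamma(G)=k=\Delta(G)$. Finally, $G$ is a Moore graph of diameter $2$, so by the Hoffman--Singleton theorem $k\in\{2,3,7,57\}$ and $n=k^2+1\le 3250$ is bounded by an absolute constant; hence reading $G$ and returning $N(v)$ for an arbitrary $v$ produces a minimum dominating set in $O(1)$ time.
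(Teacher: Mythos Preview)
Your proof is correct but follows a genuinely different route from the paper's. The paper fixes a vertex $u$ of maximum degree $\Delta$, partitions $V\setminus N[u]$ into the sets $W_i=N(v_i)\setminus N[u]$ for $v_i\in N(u)$, and argues directly (using Property~\ref{propgirth5} twice) that each $w\in W_i$ has exactly one neighbour in every other $W_j$ and that distinct $w,w'\in W_i$ hit distinct vertices of $W_j$; from this it extracts $|W_i|=\Delta-1$ for all $i$ and hence $\Delta$-regularity. Its lower bound is also combinatorial: assuming $|S|<\Delta$, some block $W_i\cup\{v_i\}$ avoids $S$, which forces $u\in S$ and then one vertex per element of $W_i$, giving $|S|\ge\Delta$ after all. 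Your argument instead derives the global identity $\sum_{u\in N(v)}\deg(u)=n-1$, double-counts to obtain the degree quadratic $\deg(v)^2-n\deg(v)+(2|E|-n+1)=0$, and rules out the two-root case by the inequality $(r-1)(s-1)\le 0$; this is essentially the classical proof that diameter-$2$ girth-$5$ graphs are Moore graphs. Your lower bound $|D|(k+1)\ge k^2+1$ is a one-line averaging, shorter than the paper's case analysis. Finally, you justify the $O(1)$ claim rigorously via Hoffman--Singleton ($n\le 3250$), whereas the paper simply outputs $N(x)$ for an arbitrary $x$ without bounding $n$. In short, the paper's proof is self-contained and purely combinatorial; yours is more algebraic, ties the result to the Moore-graph classification, and gives a cleaner lower bound and a firmer $O(1)$ justification, at the cost of importing Hoffman--Singleton.
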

\begin{proof}
Since $C_5\subseteq_i G$ we have $\Delta(G)\geq 2$. The case where $\Delta(G)= 2$ corresponds to $G=C_5$ and the property is satisfied. So we can consider $\Delta(G)\geq 3$.  Let $u\in V$ such that $d(u)=\Delta(G)=\Delta$.

    Let $N(u)=\{v_1,\ldots,v_{\Delta}\}$. Let $W=V\setminus N[u]$ and let $W_i=\{w\in W \mid v_iw\in E\}$. Note that $\vert W_i \vert \leq \Delta-1$ and since $diam(G)=2$ we have $\bigcup W_i=W$. Since $G$ has no triangle each $W_i$ is an independent set. From Property \ref{propgirth5} there is no vertex $w\in W$ such that $w\in W_i\cap W_j$, $1\leq i < j\leq \Delta$. Therefore $W_1,\ldots,W_{\Delta}$ is a partition of $W$ into independent sets.

    Let $w\in W_i$. From Property \ref{propgirth5}, for every $j$ with $j\neq i$, $\vert N[w]\cap N[v_j]\vert =1$. It follows that $w$ has a neighbor in each $W_j$. Let $w,w'\in W_i$. From Property \ref{propgirth5} we have $N(w)\cap N(w') \cap W_j=\emptyset,i\ne j$. Therefore $\vert W_1\vert = \cdots = \vert W_{\Delta}\vert =\Delta-1$ and thus $G$ is $\Delta$-regular.

    Suppose that there exists  $S$ a minimum dominating set of $G$ such that $\vert S\vert < \Delta$. Then there exists an $i\in \{1,\ldots, \Delta\}$ such that $S\cap (W_i\cup \{v_i\})=\emptyset$. Since $N(v_i)\cap S\neq \emptyset$, it follows that $u\in S$. Since $\vert W_i\vert = \Delta-1$ and that for each pair $w,w'\in W_i$, $N[w]\cap N[w']=\{v_i\}$, it follows that $\vert S\cap W\vert = \Delta-1$. But then $\vert S\vert = \Delta$, a contradiction.

    Note that $N(u)$ is a dominating set of $G$ and thus $\gamma(G)=\Delta(G)$. Since $G$ is $\Delta$-regular, it follows that for any $x$, $N(x)$ is a minimum dominating set of $G$. Therefore we can compute a minimum dominating set in $O(1)$ time.
\end{proof}

We are ready to show a complexity dichotomy.
\begin{theorem}
    For the graphs of girth $g$ with diameter $2$, \textsc{Dominating Set} is $\mathsf{NP}$-complete if $g\in \{3,4\}$; and polynomial-time solvable if $g\geq 5$.
\end{theorem}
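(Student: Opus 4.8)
The plan is to split the three ranges of $g$ and assemble results already proved, since membership in $\mathsf{NP}$ is immediate in every case (a dominating set of size at most $k$ is a polynomial-size certificate). So the only work is to place each exact girth value on the correct side of the tractability boundary.

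For $g\ge 5$ I would argue that the class is essentially degenerate. By Proposition \ref{g>5} there is no graph with diameter $2$ and girth at least $6$, so for every $g\ge 6$ the class is empty and \textsc{Dominating Set} is vacuously polynomial-time solvable. The only surviving value is $g=5$, and Proposition \ref{girth5} already gives $\gamma(G)=\Delta(G)$ together with an explicit $\gamma$-set, namely the neighborhood $N(x)$ of any vertex $x$, computable in $O(1)$ time. Hence the problem is polynomial-time solvable whenever $g\ge 5$.

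For $g=3$ I would invoke the hardness that underlies Theorem \ref{d2trifree}: \textsc{Dominating Set} is $\mathsf{NP}$-complete for split graphs with diameter $2$ (\cite{Splitdiam2}). A split graph $G=(K\cup S,E)$ containing a triangle has girth exactly $3$, and we may assume the reduction output contains one, since an instance with $\gamma(G)\le 2$ is solved by brute force. This settles the $\mathsf{NP}$-complete side for $g=3$. For $g=4$ I would reuse Theorem \ref{d2trifree} itself, which establishes $\mathsf{NP}$-completeness for triangle-free graphs with diameter $2$. Such a graph has girth in $\{4,5\}$ by Proposition \ref{g>5}, so the single point to verify is that the graph $G'$ built in the proof of Theorem \ref{d2trifree} has girth exactly $4$, i.e. contains an induced $C_4$. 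Here I would do the only genuine check: picking $u\in K$ and $v\in S$ with $uv\notin E$ (such a non-edge exists in any nontrivial split instance, otherwise a single vertex of $K$ dominates $G$), the four vertices $t,u_1,v_2,s$ induce a $C_4$ in $G'$, since $tu_1,\ u_1v_2,\ v_2s,\ st\in E'$ while $tv_2,\ u_1s\notin E'$. Combined with the triangle-freeness already shown, $G'$ has girth exactly $4$.

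The hard part is not mathematical but a matter of girth bookkeeping: the statement concerns graphs of girth \emph{exactly} $g$, whereas Theorem \ref{d2trifree} is phrased for the whole class of triangle-free (girth $\ge 4$) diameter-$2$ graphs. The crux is therefore to confirm that its reduction never produces a girth-$5$ output, which the induced $C_4$ above handles, and symmetrically that the split-graph instances used for $g=3$ genuinely realize girth $3$. Once these two observations are in place, the dichotomy follows by collecting the three cases.
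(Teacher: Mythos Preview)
Your proof is correct, and for $g\ge 5$ and $g=4$ it matches the paper's argument exactly (the paper merely asserts that the graphs built in Theorem~\ref{d2trifree} have girth~$4$; your explicit induced $4$-cycle $t{-}u_1{-}v_2{-}s{-}t$ is a useful addition). The one genuine difference is at $g=3$: the paper invokes its own $K_{1,4}$-free reduction (Theorem~\ref{K14free}), whose large clique $V_1\cup S\cup\{s\}$ immediately forces girth~$3$, whereas you appeal directly to the split-graph hardness of~\cite{Splitdiam2} and observe that any split instance with $\gamma(G)\ge 3$ must have $|K|\ge 3$ and hence a triangle. Both routes work; yours is a bit more economical since it does not rely on the heavier Section~\ref{K14} construction, while the paper's choice keeps the argument internal to reductions already carried out in the manuscript.
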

\begin{proof}
   From Property \ref{propgirth5} we know that  when a graph  has diameter $2$ its girth is no more than five. So with Property \ref{girth5} Dominating Set  is polynomial for $g\ge 5$.
    The graphs used in the proof of Theorem \ref{K14free} have girth $3$, those used  in the proof of Theorem \ref{d2trifree} have a girth $4$.
\end{proof}

\section{Conclusion}\label{concl}
Collecting the results of Sections \ref{starfree} and \ref{Cfree}  we give two (almost) dichotomies and we prove some partial results for the remaining case.

We give a dichotomy for claw-free graphs.
\begin{theorem}\label{dicot}
    For claw-free graphs with fixed diameter $d$, \textsc{Dominating Set} is $\mathsf{NP}$-complete if $d\geq 3$, it is polynomial-time solvable otherwise.
\end{theorem}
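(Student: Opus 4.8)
The plan is to assemble the results already established earlier in Section \ref{starfree}, treating separately the three possible regimes of the diameter. The statement asks for $\mathsf{NP}$-completeness when $d\ge 3$ and polynomial-time solvability otherwise, that is, for $d\in\{1,2\}$, so the whole argument is a case analysis on $d$ with each case discharged by a previously proved result.

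For the hard direction, $d\ge 3$, there is essentially nothing left to do: Lemma \ref{kge3} already asserts that \textsc{Dominating Set} is $\mathsf{NP}$-complete for claw-free graphs of every fixed diameter $d\ge 3$, so I would simply cite it. For the easy direction I would split on the value of $d$. When $d=2$, polynomial-time solvability is exactly the content of Theorem \ref{clawfree}, so again I would only invoke that result. The one case not covered by a named statement is $d=1$; here I would observe that a graph of diameter $1$ is by definition a complete graph, hence any single vertex dominates it, $\gamma(G)=1$, and the problem is answered in constant time (the claw-freeness hypothesis adds nothing, since a complete graph is trivially claw-free).

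I do not expect any genuine obstacle, because all the substantive work has already been carried out: the gadget reduction from \textsc{Dominating Set} on cubic graphs in Lemma \ref{kge3}, and the structural reduction of Theorem \ref{clawfree} to proper circular-arc graphs and line graphs, which itself rests on the polynomial minimum-maximal-matching algorithm of Lemma \ref{MMM2k2} for $2K_2$-free graphs together with the circular-arc algorithm of Theorem \ref{dompolycircular}. The only point that requires a little care is the completeness of the case analysis: one must remember to dispose of the trivial $d=1$ case explicitly, since it lies outside the scope of both Lemma \ref{kge3} and Theorem \ref{clawfree}, after which the dichotomy follows immediately.
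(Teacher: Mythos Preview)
Your proposal is correct and follows the same case-analysis structure as the paper's own proof: Lemma~\ref{kge3} for $d\ge 3$, the diameter-$2$ result for $d=2$, and the trivial observation that a diameter-$1$ graph is a clique. In fact, your citation of Theorem~\ref{clawfree} for the $d=2$ case is more apt than the paper's reference to Theorem~\ref{mdsdiam2}, which strictly speaking only covers line graphs rather than all claw-free graphs.
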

\begin{proof}
    Lemma \ref{kge3} proves the case $d\ge 3$.
    Theorem \ref{mdsdiam2} proves the case $d=2$.
    When $d=1$, the graph is a clique, so taking any vertex we obtain a minimum dominating set.
\end{proof}

We give our results for $H$-free graphs with diameter two.

\begin{theorem}\label{resdiam2}
    For $H$-free graphs with diameter $2$, \textsc{Dominating Set} is $\mathsf{NP}$-complete if $H'\subseteq_i H,H'\in \{2K_2,C_3,C_4,C_5,K_{1,4}\}$; it is polynomial-time solvable if  $H\subseteq_i H',H'\in \{P_4,K_{1,3}\}$.
\end{theorem}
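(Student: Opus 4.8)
The plan is to split the statement into its two implications and prove each one from a single \emph{monotonicity principle} applied to reductions that are already available. The principle is the following: if $H'\subseteq_i H$, then every $H'$-free graph is $H$-free, so the class of $H'$-free graphs of diameter $2$ is contained in the class of $H$-free graphs of diameter $2$. Consequently $\mathsf{NP}$-hardness transfers \emph{upward} along $\subseteq_i$ (a hard family of $H'$-free instances is automatically a family of $H$-free instances, and the diameter is unchanged), while polynomial-time solvability transfers \emph{downward} (a subclass of a tractable class is tractable). Membership in $\mathsf{NP}$ is immediate, since one checks in polynomial time that a given set of at most $k$ vertices dominates $G$. Hence it suffices to exhibit, for each ``generator'', one class on which the corresponding result is already known, and then invoke the principle.

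For the hardness side I would first dispatch $C_3$ and $K_{1,4}$ directly: Theorem \ref{d2trifree} gives $\mathsf{NP}$-completeness for $C_3$-free (triangle-free) graphs of diameter $2$, and Theorem \ref{K14free} gives it for $K_{1,4}$-free graphs of diameter $2$. The three remaining generators $2K_2$, $C_4$ and $C_5$ I would treat \emph{simultaneously} via split graphs: by the classical characterization of F\"oldes and Hammer a graph is split if and only if it is $\{2K_2,C_4,C_5\}$-free, and \textsc{Dominating Set} is already $\mathsf{NP}$-complete for split graphs of diameter $2$ (the result of \cite{Splitdiam2} used in the proof of Theorem \ref{d2trifree}). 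Thus the split instances of diameter $2$ form one hard family that is at once $2K_2$-free, $C_4$-free and $C_5$-free, settling all three generators in a single stroke. Applying the upward transfer to the five generators then yields $\mathsf{NP}$-completeness whenever $H'\subseteq_i H$ with $H'\in\{2K_2,C_3,C_4,C_5,K_{1,4}\}$.

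For the tractable side the two generators are $K_{1,3}$ and $P_4$. The claw case is exactly Theorem \ref{clawfree}: \textsc{Dominating Set} is polynomial-time solvable for $K_{1,3}$-free graphs of diameter $2$. For $P_4$-free graphs I would argue that a graph of diameter $2$ is connected, and a connected $P_4$-free graph (a connected cograph) on at least two vertices is a join of two smaller graphs; then any vertex of one side together with any vertex of the other dominates the whole graph, so $\gamma(G)\le 2$, and one only tests for a universal vertex to separate $\gamma=1$ from $\gamma=2$. This is clearly polynomial. The downward transfer then gives polynomial-time solvability whenever $H\subseteq_i H'$ with $H'\in\{P_4,K_{1,3}\}$.

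The main obstacle is not any individual reduction but the organization of the generating set: the decisive observation is that $2K_2$, $C_4$ and $C_5$ need not be attacked one at a time, because the F\"oldes--Hammer characterization exhibits split graphs of diameter $2$ as a common hard witness for all three. I would also verify consistency, namely that no $H$ meets both hypotheses at once: every induced subgraph of $P_4$ or $K_{1,3}$ is $2K_2$-, $C_3$-, $C_4$-, $C_5$- and $K_{1,4}$-free (none of the five generators is an induced subgraph of $P_4$ or of $K_{1,3}$), so the two regimes are disjoint and the statement is coherent as an (almost) dichotomy.
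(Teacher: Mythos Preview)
Your proposal is correct and follows essentially the same route as the paper: split graphs of diameter $2$ (which are $(2K_2,C_4,C_5)$-free) cover three generators at once, Theorems \ref{d2trifree} and \ref{K14free} cover $C_3$ and $K_{1,4}$, and on the tractable side Theorem \ref{clawfree} handles $K_{1,3}$ while the cograph/join argument gives $\gamma\le 2$ for $P_4$. Your version is simply more explicit about the monotonicity principle and the F\"oldes--Hammer characterization, and adds a consistency check that the paper omits.
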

\begin{proof}
    We know that the \textsc{Dominating Set} is $\mathsf{NP}$-complete for split graphs that are $(2K_2,C_4,C_5)$-free graphs. From Theorem \ref{d2trifree} and Theorem \ref{K14free} it is also $\mathsf{NP}$-complete for $C_3$-free and $K_{1,4}$-free graphs.

    \textsc{Dominating Set} is polynomial-time solvable for cographs that are $P_4$-free graphs because for such graphs $G$ with diameter $2$ we have $\gamma(G)\le 2$.   From Theorem \ref{clawfree} we have the result for claw-free graphs. \end{proof}

The connected graphs $H$ that are not involved in Theorem \ref{resdiam2} are the {\it chair}, that is, the graph with five vertices $v_i,1\le i\le 5,$ and four edges $v_1v_2,v_2v_3,v_1v_4,v_1v_5$, and the {\it biclaw}, that is, the graph with six vertices $v_i,1\le i\le 6,$ and five edges $v_1v_2,v_1v_3,v_1v_4,v_2v_5,v_2v_6$. The disconnected graphs are $\{K_{1,3}+aK_1,P_4+aK_1,chair+aK_1, biclaw+aK_1\},a\ge 1$.

We use Theorem \ref{resdiam2} to solve the cases for $(K_{1,3}+aK_1)$-free graphs and $(P_4+aK_1)$-free graphs.
\begin{coro}
    Let $a\ge 1$ be a fixed integer and $H\in  \{K_{1,3}+aK_1,P_4+aK_1\}$. \textsc{Dominating Set} is polynomial-time solvable for  the classes of $H$-free graphs with diameter $2$.
\end{coro}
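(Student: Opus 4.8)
The plan is to split on whether $G$ actually contains the ``core'' graph $H'\in\{K_{1,3},P_4\}$ (so that $H=H'+aK_1$) as an induced subgraph. Since $|V(H')|=4$, this test can be carried out in $O(n^4)$ time by brute force over all $4$-subsets of $V$.

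If $G$ is $H'$-free, I would simply invoke the results already established for diameter-$2$ graphs. For $H'=K_{1,3}$ the graph is claw-free and Theorem~\ref{clawfree} applies directly; for $H'=P_4$ the graph is a cograph and, as recalled in the proof of Theorem~\ref{resdiam2}, a cograph of diameter $2$ satisfies $\gamma(G)\le 2$. In both cases \textsc{Dominating Set} is solvable in polynomial time.

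The interesting case is when $G$ does contain an induced copy of $H'$, say on a vertex set $X$ with $|X|=4$. Here the key observation is that $H$-freeness localises around $X$: if the set $R=V\setminus N[X]$ contained an independent set $Y$ of size $a$, then $Y$ would be both independent and anticomplete to $X$ (no vertex of $Y$ lies in $N(X)$), so $G[X\cup Y]$ would induce $H'+aK_1=H$, a contradiction. Hence $\alpha(G[R])\le a-1$. From this I would exhibit a small dominating set explicitly: the four vertices of $X$ dominate all of $N[X]$, and any maximal independent set $J$ of $G[R]$ dominates $R$ with $|J|\le\alpha(G[R])\le a-1$. Therefore $X\cup J$ dominates $G$, giving $\gamma(G)\le |V(H')|+a-1=a+3$, a constant depending only on the fixed parameter $a$.

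Once $\gamma(G)$ is bounded by the constant $a+3$, a minimum dominating set is found by exhaustively testing all vertex subsets of size at most $a+3$, each tested for domination in $O(n^2)$ time; this runs in $O(n^{a+5})$ time, which is polynomial for fixed $a$. The only real content of the argument is the structural bound $\gamma(G)\le a+3$ in the second case; detecting $H'$, applying the earlier theorems, and the final brute-force search are all routine. I do not anticipate a genuine obstacle here, the main point to get right being the claim that a single induced $H'$ forces a bounded independence number outside its closed neighbourhood, which is exactly where the $+aK_1$ hypothesis is used.
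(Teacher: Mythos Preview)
Your proposal is correct and follows essentially the same route as the paper: split on whether the core $H'\in\{K_{1,3},P_4\}$ occurs, appeal to Theorem~\ref{resdiam2} (equivalently Theorem~\ref{clawfree} and the cograph fact) in the $H'$-free case, and otherwise bound $\alpha(G-N[H'])\le a-1$ to conclude $\gamma(G)\le 4+(a-1)=a+3$ and finish by brute force in $O(n^{a+5})$. Your write-up is in fact a bit more explicit than the paper's (you spell out why the $a$ independent vertices outside $N[X]$ would yield an induced $H$, and why a maximal independent set of $R$ dominates $R$), but the argument is the same.
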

\begin{proof}
    Let $H'\in \{K_{1,3},P_4\}$ and let $G$ be a $H$-free graph. From Theorem \ref{resdiam2}, we can compute a minimum dominating set of $G$ in polynomial-time when $G$ is $H'$-free.

    So we assume that $G$ contains $H'$ as an induced subgraph. Let $G'=G-N[H']$. Notice that $\alpha(G')<a$. Since $\gamma(G')\leq \alpha(G')$, it follows that $\gamma(G')<a$. Then $\gamma(G)\leq \gamma(N[H']) + \gamma(G')\le 4 + a - 1$.
We can find $H'$ in $O(n^4)$. Then by brute force, we can compute a minimum dominating set of $G$ in $O(n^{a+5})$.
\end{proof}

We left open the complexity status of \textsc{Dominating Set} for the classes of $(chair+aK_1)$-free graphs and of $(biclaw+aK_1)$-free graphs for any fixed nonnegative $a$.

The cases of $chair$-free graphs or $biclaw$-free graphs would be of special interest. Another open question concerns the class of line graphs. From Theorem \ref{clawfree} we know that \textsc{Dominating Set} is polynomial  for the line graphs with diameter $2$. Now, given any fixed integer $d\ge 3$, can \textsc{Dominating Set} be solved in polynomial-time for line graphs with diameter $d$ ?

\end{document}